\begin{document}

\newtheorem{theorem}{Theorem}[section] \newtheorem{cor}[theorem]{Corollary} \newtheorem{pro}[theorem]{Property}
\newtheorem{rmk}[theorem]{Remark} \newtheorem{lemma}[theorem]{Lemma} \newtheorem{gen}{Generalization}
\newtheorem{prop}[theorem]{Proposition} \newtheorem{claim}[theorem]{Claim}
\newtheorem{observation}[theorem]{Observation} \newtheorem{notation}[theorem]{Notation}
\newtheorem{conjecture}[theorem]{Conjecture} \newtheorem{defin}[theorem]{Definition}
\newtheorem{defins}[theorem]{Definitions}

\newcommand{\Sssp}{\mbox{$\Sigma_{t_+}$}} \newcommand{\Sssm}{\mbox{$\Sigma_{t_-}$}}
\newcommand{\Ggg}{\mbox{$\Gamma$}} \newcommand{\map}{\mbox{$\rightarrow$}} \newcommand{\bbb}{\mbox{$\beta$}}
\newcommand{\la}{\mbox{$\lambda$}} \newcommand{\aaa}{\mbox{$\alpha$}} \newcommand{\eee}{\mbox{$\epsilon$}}
\newcommand{\Rrr}{\mbox{$\mathcal{R}$}} \newcommand{\lpd}{\mbox{$L^{\mathcal{V}(P,D^*)}$}}
\newcommand{\mcP}{\mbox{$\mathcal{P}$}}
\newcommand{\fpd}{\mbox{$\mathcal{V}(P,D^*)$}} \newcommand{\bdd}{\mbox{$\partial$}}
\newcommand{\Sss}{\mbox{$\Sigma$}} \newcommand{\Li}{\mbox{$L_+^{in}$}} \newcommand{\Lo}{\mbox{$L_+^{out}$}}
\newcommand{\spa}{\hspace{.2cm}}

\title{High Distance Bridge Surfaces} \author{Ryan Blair} \author{Maggy Tomova} \author{Michael Yoshizawa}  \thanks{Research partially supported by an
NSF grant.}

\begin{abstract}
Given integers $b$, $c$, $g$, and $n$, we construct a manifold $M$ containing a $c$-component link $L$ so that there is a bridge surface $\Sigma$ for $(M,L)$ of genus $g$ that intersects $L$ in $2b$ points and has distance at least $n$.  More generally, given two possibly disconnected surfaces $S$ and $S'$, each with some even number (possibly zero) of marked points, and integers $b$, $c$, $g$, and $n$, we construct a compact, orientable manifold $M$ with boundary $S\cup S'$ such that $M$ contains a $c$-component tangle $T$ with a bridge surface $\Sigma$ of genus $g$ that separates $\bdd M$ into $S$ and $S'$, $|T\cap \Sigma|=2b$ and $T$ intersects $S$ and $S'$ exactly in their marked points, and $\Sigma$ has distance at least $n$.
\end{abstract} \maketitle

\section{Introduction}

In recent years, there have been a number of results concerning knots with high distance bridge surfaces. For example, in \cite{BS} Bachman and Schleimer show that any essential surface in the complement of such a knot must have high genus. This result is generalized in \cite{T3}, where it was shown that any other bridge surface of such a knot must also have high genus or a high number of marked points. Several other recent constructions of interesting examples rely on such knots -- in \cite{BT} Blair and Tomova construct knots for which width is not additive, and in \cite{JT} Johnson and Tomova construct examples of knots with two different bridge surfaces that require a high number of stabilizations and perturbations to become isotopic.

We present the first construction of a knot with a high distance bridge surface. In fact, given a collection of (surface, integer)-pairs with some minor restrictions, we show how to construct a (3-manifold, tangle)-pair with the collection of surfaces being the boundary of the manifold and with the prescribed number of boundary points of the tangle on each surface. We can choose our (3-manifold, tangle)-pair to be of arbitrarily high distance and have a given number of components.

Our construction is explicit and purely combinatorial. Many of the ideas in this paper are rooted in \cite{Ev} where Evans constructs 3-manifolds with arbitrarily high distance Heegaard splittings. We have made a number of generalizations and improvements to Evans' arguments. Some of these are necessitated by our more general setting, which allows for marked surfaces (the definition is provided in Section \ref{sec:bridge_surfaces}), and some are introduced to make the arguments more elegant and easier to follow. In particular, we eliminate the language of stacks and partial stacks used by Evans and replace it with a more intuitive description. We also develop tools to control the number of components of the link we are constructing, a challenge that doesn't arise in the set-up considered by Evans.

In most of the paper, we will restrict our attention to a link $L$ in a closed 3-manifold $M$. In the last section of the paper, we will show that our results can be easily extended to tangles in manifolds with boundary as long as each boundary component intersects the tangle in an even number of points (possibly zero).

\vspace{10pt}

\noindent \textbf{Acknowledgements.} We would like to thank the referee for many helpful comments.

\section{Bridge Surfaces and pants decompositions} \label{sec:bridge_surfaces}

\begin{defin} Let $M$ be a closed orientable manifold containing a link $L$. An embedded surface $\Sigma$ in $M$ that is transverse to $L$ is a \emph{bridge surface} for $(M,L)$ if it separates $M$ into two handlebodies $V_1$ and $V_2$ and $L \cap V_i$ is a collection of arcs $A_i$ parallel to $\bdd V_i$.\end{defin}

Arcs parallel to the boundary of a handlebody are naturally associated with certain compressing disks for the manifold obtained by removing a neighborhood of these arcs from the handlebody.

\begin{defin}
Given a boundary parallel arc $\alpha$ properly embedded in a handlebody $V$, the \emph{compressing disk associated to $\alpha$} is the properly embedded disk given by the frontier of the closed regular neighborhood of the disk of parallelism for $\alpha$ in $V$.
\end{defin}

We will often consider closed surfaces in $M$ that intersect $L$ transversely. We will refer to these intersections as marked points to avoid confusion with other types of boundary components the surface may have. Thus,
\begin{defin}
An \emph{$n$-marked} surface is a surface together with $n$ marked points.
\end{defin}

\begin{rmk} In this paper, we will always assume that if the bridge surface is a sphere, it has at least six marked points and if it is a torus, then it has at least two marked points.
\end{rmk}

\begin{defin}
A simple closed curve in an $n$-marked surface is \emph{essential} if it is disjoint from all marked points and it doesn't bound a disk with fewer than two marked points.
\end{defin}

\begin{defin}
Let $\Sigma$ be an $n$-marked surface with marked points $p_1,p_2,...,p_n$.  Two essential curves $x$ and $y$ in $\Sigma$ intersect \emph{efficiently} if they intersect transversely and every bigon they cobound in $\Sigma$ contains at least one marked point.  Note that this is equivalent to $x$ and $y$ having minimal geometric intersection number (which we will denote as $i(x,y)$) in $\Sigma - \{p_1,p_2,...,p_n\}$.
\end{defin}

\begin{defin}
A \emph{pants decomposition} of a closed $2b$-marked surface $\Sigma$ is a collection of essential curves $\mathcal{P}$ such that $\Sigma-\mathcal{P}$ is a collection of open pants and 2-marked open disks.
\end{defin}

We can construct a pants decomposition for a marked surface as follows.  Starting with a closed $2b$-marked surface $\Sigma$, select $b$ disjoint simple closed curves $q_1,q_2,...,q_b$ that bound disjoint disks $Q_1,Q_2,...,Q_b$ in $\Sigma$ such that exactly one pair of points lie in each disk. If we let $\mathcal{R}$ be any pants decomposition of the closure of $\Sigma - \cup_i Q_i$, then $\mathcal{P} = \mathcal{R} \cup (\cup_i q_i)$ is a pants decomposition of $\Sigma$.

\begin{defin}
Given a pants decomposition $\mathcal{P}$ for a closed $2b$-marked surface $\Sigma$ we obtain a handlebody containing a collection of boundary parallel arcs in the following way.  Take $\Sigma \times I$ and attach a 2-handle to $\Sigma \times \{1\}$ along each curve in $\mathcal{P}$.  The boundary of this resulting 3-manifold is composed of $\Sigma \times \{0\}$ and a collection of spheres.  Note that some of these spheres may contain two marked points coinciding with marked points of $\Sigma \times \{1\}$.  Fill each sphere $S_i$ with a 3-ball $B_i$ and denote this 3-manifold as $V_{\mathcal{P}}$, where $\Sigma = \bdd V_{\mathcal{P}}$.  If the sphere $S_j$ contains marked points, embed an arc $\kappa_j$ in $B_j$ that is parallel to $\bdd B_j$ and has endpoints at the marked points.  We can then naturally extend $\kappa_j$ to a properly embedded arc in $V_{\mathcal{P}}$ by taking the union of $\kappa_j$ with the $I$-fibers of $\Sigma \times I$ corresponding to the endpoints of $\kappa_j$.  Let $A_{\mathcal{P}}$ be the collection of these embedded arcs.
\end{defin}

\begin{rmk} Note that the handlebody with boundary parallel arcs constructed above is unique up to isotopies fixing the boundary.
\end{rmk}

\begin{defin}
If $\mathcal{P}$ is a pants decomposition for a $2b$-marked surface $\Sigma$, let $K_\mathcal{P}$ be the collection of all essential curves in $\Sigma$ that bound disks in $V_{\mathcal{P}}$ disjoint from $A_{\mathcal{P}}$.
\end{defin}

Given a handlebody containing a collection of boundary parallel arcs, we want to choose a disjoint collection of compressing disks so that cutting the handlebody along this collection yields components that are either
\begin{itemize}
\item balls with exactly three ``scars" coming from this collection and containing no arcs or
\item balls with exactly one ``scar" and containing a single boundary parallel arc.
\end{itemize}

\begin{defin}\label{def:compcoll}
Let $(V, A)$ be a handlebody $V$ containing a collection of $b$ boundary parallel arcs $A$. A \emph{complete collection of disks} for $(V,A)$ is a collection of disjoint compressing disks $\mathcal{D}$ for the $2b$-marked surface $\bdd V$ that is disjoint from $A$ such that each component of $\bdd V- \bdd \mathcal{D}$ is either an unmarked pair of pants or a $2$-marked disk. A pants decomposition of $\bdd V$ with marked points $\bdd V \cap A$ is any collection of curves that is the boundary of a complete collection of disks. Notice that this is consistent with our general definition of a pants decomposition.
\end{defin}

If $V$ is a genus $g$ handlebody and $A$ is a collection of $b$ boundary parallel arcs, then we can obtain a complete collection of disks for $(V,A)$ in the following way.  Let $\mathcal{E} = \{E_{1}, E_{2}, ..., E_{b}\}$ be a collection of disjoint disks that are compressing disks associated to the arcs of $A$. Cut $V$ along $\mathcal{E}$ to obtain a genus $g$ handlebody $V' \subset V$, where a copy of $\mathcal{E}$ lies in $\partial V'$.  Let $\mathcal{F}$ be a collection of $3g-3$ pairwise disjoint compressing disks for $V'$ such that $\partial \mathcal{F}$ is a pants decomposition of $\partial V'$.  Using collars of $\partial V'$ in $V$, we can isotope $\mathcal{F}$ so that $\partial \mathcal{F}$ lies off of $\mathcal{E} \subset \partial V'$.  Then $\mathcal{F}$ is also a collection of disks in $V$ that is disjoint from $\mathcal{E}$. Consequently, $\bdd V - (\bdd \mathcal{E} \cup \bdd \mathcal{F})$ is composed of either $2$-marked disks or spheres with at least three punctures.  For any $m$-punctured sphere with $m > 3$, we can add a compressing disk $D$ to $\mathcal F$ that is disjoint from all other disks in $\mathcal{E}$ and $\mathcal{F}$ such that $\partial D$ cuts this $m$-punctured sphere into two punctured spheres, each with strictly fewer punctures than $m$.  Repeating this process, we obtain a collection of disks $\mathcal{E} \cup \mathcal{F}$ that is a complete collection of disks for $(V,A)$.

Given a pants decomposition, there are certain arcs in the complementary components that will be of particular importance.

\begin{defin}
Let $\mathcal{P}$ be a pants decomposition of a $2b$-marked surface $\Sigma$ that is the boundary of a handlebody containing boundary parallel arcs.

If $P$ is the closure of a pair of pants component of $(\Sigma - \mathcal{P})$,
\begin{itemize}
\item a \emph{seam} of $P$ is a properly embedded arc in $P$ that has endpoints on different components of $\bdd P$,
\item a \emph{wave} of $P$ is a properly embedded arc in $P$ that has endpoints on the same component of $\bdd P$ and is not $\bdd$-parallel.
\end{itemize}

If $P$ is the closure of a $2$-marked disk component of $(\Sigma - \mathcal{P})$, then a \emph{seam} of $P$ is a properly embedded arc in $P$ that has endpoints on $\partial P$ and is not $\bdd$-parallel.
\end{defin}

Note that waves are only defined in pairs of pants and the definition of a seam is dependent on the type of component of $\Sigma - \mathcal{P}$ we are considering.  In particular, a pair of pants component can contain at most three mutually distinct isotopy classes of seams and a $2$-marked disk has at most one isotopy class of seams.

\begin{defin}
Given a $2b$-marked surface $\Sigma$ with pants decomposition $\mcP$, let $\gamma$ be an embedded curve that intersects $\mcP$ efficiently.  Then $\gamma$ is \emph{$k$-seamed} with respect to $\mcP$ if for each $P$, where $P$ is the closure of a component in $\Sigma - \mathcal{P}$, there are at least $k$ arcs of $\gamma \cap P$ representing each isotopy class of seams of $P$ and the maximum number of mutually distinct isotopy classes of seams of $P$ are represented.
\end{defin}

\begin{defin}
Similarly, a finite collection of disjoint curves $\Gamma = \gamma_1 \cup \gamma_2 \cup ... \cup \gamma_s$ is \emph{$k$-seamed} with respect to $\mcP$ if each $\gamma_i$ intersects $\mcP$ efficiently and for each component $P$, where $P$ is the closure of a component of $\Sigma - \mathcal{P}$, there are at least $k$ arcs of $\Gamma \cap P$ representing each isotopy class of seams of $P$ and the maximum number of mutually distinct isotopy classes of seams of $P$ are represented.
\end{defin}

These definitions imply that each component of $\mcP$ intersects a $k$-seamed curve $\gamma$ (or $k$-seamed collection of curves $\Gamma$) in at least $2k$ points.

\begin{lemma}\label{compdiskwave}
Let $(V, A)$ be a handlebody $V$ containing a collection of $b$ boundary parallel arcs $A$ and let $\Sigma$ be the $2b$-marked boundary of $V$. Additionally, let $\mathcal{D}$ be a complete collection of disks for $(V,A)$ and denote by $\mathcal{P}$ the pants decomposition of $\Sigma$ resulting from the boundary of $\mathcal{D}$. If $\gamma$ is an essential curve in $\Sigma$ that bounds a compressing disk $E$ in $V$ such that $E\cap A=\emptyset$, then $\gamma$ is isotopic to a curve in $\mathcal{P}$ or $\gamma$ contains a wave of a pair of pants component of $\Sigma-\mathcal{P}$.
\end{lemma}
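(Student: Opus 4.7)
My plan is to minimize $|E\cap \mathcal{D}|$ over all compressing disks $E$ for $V$ with $\bdd E$ isotopic to $\gamma$ in $\Sigma$ (rel marked points) and with $E$ disjoint from $A$, after putting $E$ in general position with $\mathcal{D}$. A standard innermost-disk argument, using that $V$ is irreducible and that $\mathcal{D}$ is disjoint from $A$, eliminates all simple closed curve components of $E\cap \mathcal{D}$: the ball bounded by an innermost sphere (formed from a subdisk of $E$ and a subdisk of some $D\in\mathcal{D}$) must be disjoint from $A$ because $A$ has its endpoints on $\Sigma$ and the sphere does not meet $\Sigma$. So we may assume $E\cap \mathcal{D}$ consists only of arcs. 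If $E\cap \mathcal{D}=\emptyset$, then $E$ lies in a single component of $V\setminus \mathcal{D}$, which by the construction preceding Definition \ref{def:compcoll} is a $3$-ball whose $\Sigma$-portion is either an unmarked pair of pants or a $2$-marked disk; in either case the only essential simple closed curves in $\Sigma$ lying in such a piece are isotopic to one of its boundary circles, hence to a curve of $\mathcal{P}$, giving the first alternative of the lemma.

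Otherwise, pick an outermost arc $\alpha$ of $E\cap \mathcal{D}$ on $E$, cutting off a subdisk $E_0\subset E$ with $\bdd E_0=\alpha\cup \beta$, where $\alpha\subset D$ for some $D\in \mathcal{D}$ and $\beta$ is an arc of $\gamma$ with endpoints on $\bdd D$. Since $E_0$ lies in a single component $B$ of $V\setminus \mathcal{D}$, the arc $\beta\subset \Sigma\cap \bdd B$ lies in a single component $P$ of $\Sigma\setminus \mathcal{P}$, with both endpoints on the single boundary circle $\bdd D\subset \bdd P$. If $P$ is a pair of pants, $\beta$ cannot be a seam and is therefore either a wave---in which case the second alternative of the lemma holds---or $\bdd$-parallel, and in the latter case $\beta$ cobounds an unmarked subdisk $\Delta\subset P$ with an arc of $\bdd D$; then $E_0\cup \Delta$ together with a subdisk of $D$ forms a sphere bounding a ball disjoint from $A$, and pushing $E_0$ across this ball strictly reduces $|E\cap \mathcal{D}|$, contradicting minimality.

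If $P$ is a $2$-marked disk, then $\beta$ is either $\bdd$-parallel (and the same intersection-reducing isotopy through the unmarked subdisk of $P$ cut off by $\beta$ contradicts minimality) or a seam. The seam subcase is the main obstacle and is where the hypothesis $E\cap A=\emptyset$ is essential: by construction, the ball $B$ contains a unique boundary-parallel arc $a\subset A$ whose two endpoints are the two marked points of $P$, and if $\beta$ were a seam it would separate these marked points on $\bdd B$. Then the properly embedded disk $E_0\subset B$ would separate $B$ into two $3$-balls each containing exactly one endpoint of $a$, forcing $a\cap E_0\neq\emptyset$ and contradicting $E\cap A=\emptyset$. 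Hence the seam case cannot occur, and one of the two conclusions of the lemma must hold.
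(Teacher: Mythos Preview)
Your proof is correct and follows essentially the same approach as the paper's: minimize $|E\cap\mathcal{D}|$, take an outermost arc, and do a case analysis on whether the outermost subdisk lands in a pair-of-pants component or a $2$-marked-disk component of $V\setminus\mathcal{D}$. You are more explicit than the paper about eliminating circle components and about why the $\partial$-parallel subcase contradicts minimality, but the logical skeleton is the same.
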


\begin{proof}
Isotope $E$ to intersect $\mathcal{D}$ minimally. If $E\cap \mathcal{D}=\emptyset$ then $\gamma$ is an essential curve contained in a component of $\Sigma-\mathcal{P}$, so $\gamma$ is isotopic to some curve in $\mathcal{P}$. Hence we can assume $E$ intersects $\mathcal{D}$ minimally and $E\cap \mathcal{D}\neq \emptyset$.

Let $\alpha$ be an outermost arc of $E\cap \mathcal{D}$ in $E$ so that $\alpha$ together with an arc $\beta$ in $\partial E$ cobound a disk $E^*$ in $E$ that is disjoint from $\mathcal{D}$ in its interior. Note that, since $E$ has been isotoped to intersect $\mathcal{D}$ minimally, $\alpha$ is an essential arc in $\Sigma-\mathcal{P}$. $E^*$ is contained in some component $N$ of $V-\mathcal{D}$. A component of $V-\mathcal{D}$ is a 3-ball incident to three, not necessarily distinct, disks in $\mathcal{D}$ or a 3-ball containing a single arc of $A$ and incident to a single disk of $\mathcal{D}$. If $N$ is a component of the first type then $\alpha$ is a wave in a pair of pants component of $\Sigma-\mathcal{P}$. If $N$ is a component of the second type then $\partial E^*$ separates the endpoints of the arc of $A$ properly embedded in $N$. This is impossible since $E^*\subset E$ is disjoint from $A$.
\end{proof}

\section{The Dehn Twist Operator}

The following definition of the Dehn twist operator is Definition 3.3 in \cite{Ev} that has been modified to allow for $\Sigma$ to be a marked surface.  The construction presented in \cite{Ev} to obtain the image of the Dehn twist operator also works in our setting.

\begin{defin}
Suppose $X = x_1 \cup x_2 \cup ... \cup x_s$ and $Y = y_1 \cup y_2 \cup ... \cup y_t$ are collections of simple closed curves on a possibly marked surface $\Sigma$ such that $x_i\cap x_j=\emptyset$, $y_i\cap y_j=\emptyset$ and $X\cap y_j \neq \emptyset$ for all $i, j$ and all intersections of $X$ with $Y$ are efficient. An image of a collection $X$ under the \emph{Dehn twist operator along a collection $Y$}, denoted by $\tau_{Y}(X)$, is the union of the images $\{\tau_{Y}(x_1), \tau_{Y}(x_2), ..., \tau_{Y}(x_s)\}$ of $\{x_1, x_2, ..., x_s\}$ where $\tau_Y = \tau_{y_1} \circ \tau_{y_2} \circ ... \circ \tau_{y_t}$.
\end{defin}

\begin{defin}
Let $X=x_1 \cup x_2 \cup ... \cup x_s$ be a collection of pairwise disjoint curves on a possibly marked surface $\Sigma$.  Suppose $A$ is an annulus in $\Sigma$ disjoint from the marked points of $\Sigma$ and with an $I$-fibration such that $A \cap X$ consists of $I$-fibers.  Let $c$ be an essential properly embedded arc in $A$ that intersects each $I$-fiber of $A$ transversely.  Then $c$ has \emph{circling number $k$} with respect to $X$ if it intersects each component of $A \cap X$ at least $k$-times.
\end{defin}

\begin{defin}
Let $X=x_1 \cup x_2 \cup ... \cup x_s$ be a collection of pairwise disjoint curves on a possibly marked surface $\Sigma$ and let $y$ be a simple closed curve on $\Sigma$ with efficient intersection with $X$.  Suppose $A$ is an annular neighborhood of $y$ with an $I$-fibration such that $A \cap X$ are $I$-fibers.  Then a simple closed curve $\gamma$ in $\Sigma$ \emph{k-circles around} $y$ with respect to $X$ if $\gamma$ can be isotoped to have efficient intersection with $X$ and $y$ such that a component of $\gamma \cap A$ has circling number $k$ with respect to $X$ (see Figure \ref{fig:circles}).
\end{defin}

\begin{figure}[h]
\labellist
\footnotesize \hair 2pt
\pinlabel \textcolor{green}{$\gamma$} at 47 108
\pinlabel \textcolor{red}{$X$} at 95 95
\pinlabel \textcolor{blue}{$y$} at 47 78
\pinlabel $A$ at 0 28
\endlabellist
\centering \scalebox{1.5}{\includegraphics{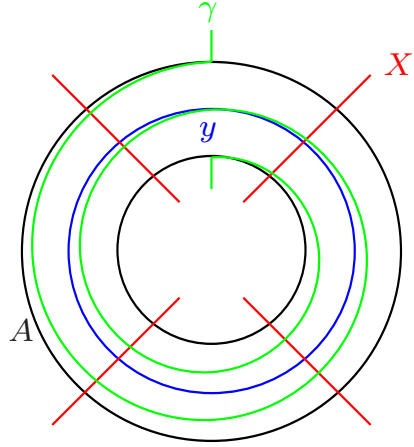}}
\caption{$\gamma$ 2-circles around $y$.}\label{fig:circles}
\end{figure}

The following lemma is a modified version of Lemma 4.3 in Evans \cite{Ev}.

\begin{lemma}\label{almostcontained}

Let $\Sigma$ be an orientable possibly marked surface. Suppose $X = x_1 \cup ... \cup x_s$ and $Y = y_1 \cup...\cup y_t$ are collections of pairwise disjoint essential simple closed curves in $\Sigma$ such that $X$ and $Y$ intersect efficiently and $i(x_i, y_j)\geq 2$ for all $i,j$. Let $\gamma$ and $\gamma'$ be essential simple closed curves in $\Sigma$ such that each meets $Y$ efficiently, $\gamma\cap \gamma'= \emptyset$, $\gamma$ meets $Y$ nontrivially, and $\gamma$ meets $X$ efficiently. If there exists a component $y^1_m$ of $\tau^2_{Y}(X)$ such that $\gamma'$ 1-circles around $y^1_m$ with respect to $X$, then there exists a component $y_l$ of $Y$ such that $\gamma$ 1-circles around $y_l$ with respect to $X$.

\end{lemma}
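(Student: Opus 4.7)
The plan is to exploit the explicit structure of $\tau_Y^2(X)$ in order to translate a circling property for $\gamma'$ around a component of $\tau_Y^2(X)$ into a circling property for $\gamma$ around a component of $Y$. First I would unpack the hypothesis: the component $y_m^1$ of $\tau_Y^2(X)$ is obtained from some $x_i$ by two Dehn twists along $Y$. Up to isotopy, $y_m^1$ agrees with $x_i$ outside annular neighborhoods of the $y_j$, and inside each annular neighborhood of some $y_j$ that meets $x_i$, the curve $y_m^1$ consists of arcs that each run roughly twice parallel to $y_j$ (one pair for each point of $x_i \cap y_j$), connected to $x_i$-parallel segments just outside those annular neighborhoods. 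I would fix an annular neighborhood $A'$ of $y_m^1$ whose intersection with $X$ consists of $I$-fibers and an arc $c' \subset \gamma' \cap A'$ that meets each component of $A' \cap X$ at least once, as guaranteed by the 1-circling hypothesis.

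Next I would argue that, because of this double-twist structure, $A'$ naturally contains a sub-band that runs parallel to (and twice around) some $y_l$ of $Y$. More precisely, for each $y_j$ intersecting $x_i$, the portion of $A'$ lying in a fixed small annular neighborhood $A_j$ of $y_j$ can be arranged to consist of several sub-bands, each of which follows $y_j$ approximately twice. Since $\gamma$ meets $Y$ efficiently and nontrivially, I would let $y_l$ be a component of $Y$ that $\gamma$ hits, and focus on the sub-band $B$ of $A'$ that double-wraps around $y_l$. Because $\gamma \cap \gamma' = \emptyset$ and $c' \subset B$ traverses the sub-band essentially parallel to $y_l$, any transverse arc of $\gamma$ that crosses $y_l$ is trapped in the complementary strip of $B$ inside an annular neighborhood $A_l$ of $y_l$, and is therefore forced to travel once around $y_l$ inside $A_l$.

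To finish, I would choose an annular neighborhood $A$ of $y_l$ inside $A_l$ with $A \cap X$ consisting of $I$-fibers (this is possible because the $I$-fibration of $A_j$ from the hypothesis on $\tau_Y$ makes $A \cap X$ a collection of $I$-fibers of $A$, using that the original intersection of $X$ with $y_l$ is efficient and that $i(x_i,y_l) \geq 2$). Then the trapped arc of $\gamma$ inside $A$ must cross each $I$-fiber of $A \cap X$ at least once, giving the desired 1-circling of $\gamma$ around $y_l$ with respect to $X$. I expect the main obstacle to be the bookkeeping in the middle paragraph: confirming that $A'$ really does contain a double-wrap sub-band around some component $y_l$ that $\gamma$ actually meets (rather than one that is missed by $\gamma$), and that the disjointness of $\gamma$ from $c'$ really forces $\gamma$ to cross the full width of this sub-band. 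Handling the case distinction forced by the hypothesis $\gamma \cap Y \neq \emptyset$ — choosing $y_l$ to be a component of $Y$ that both $\gamma$ meets and $x_i$ meets (with $i(x_i,y_l)\geq 2$ ensuring enough wrapping copies) — is the critical combinatorial step.
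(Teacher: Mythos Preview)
Your approach is essentially the paper's: show that $\gamma'$ 2-circles around a component $y_l$ of $Y$ that $\gamma$ actually meets, then use disjointness of $\gamma$ from $\gamma'$ to trap $\gamma$ into 1-circling around $y_l$. The paper dispatches your flagged bookkeeping worry with a clean counting step rather than sub-band tracking: after isotoping a subarc of $\gamma'$ onto a subarc $\beta \subset y^1_m$ containing every point of $X \cap y^1_m$, the hypothesis $i(x_m, y_l) \geq 2$ guarantees at least two arcs of $y^1_m \cap A_l$, and since $y^1_m \setminus \beta$ is a single arc (disjoint from $X$, hence not containing any full such arc), at least one of these 2-circling arcs lies entirely in $\beta \subset \gamma'$ --- this is precisely where the $\geq 2$ hypothesis earns its keep and removes the ambiguity you anticipated about whether $c'$ fully traverses the relevant sub-band.
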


\begin{proof}
Let $Y^1= y^1_1 \cup... \cup y^1_s=\tau^2_{Y}(X)$, i.e., it is the collection of $s$ disjoint curves obtained by applying the double Dehn twist operator along $Y$ to $X$. One way to describe the result of the double Dehn twist is to define an annulus $A_j$ that is a regular neighborhood of $y_j$ for each $j$ so that all the annuli are pairwise disjoint and $x_i \cap A_j$ is a collection of transverse arcs for each $i$ and $j$. By assumption, for each $i$ and $j$,  $x_i \cap A_j$ contains at least two arcs. Then $y^1_i$ coincides with $x_i$ in the complement of the annuli and each arc $x_i \cap A_j$ is replaced by an arc with the same endpoints but which 2-circles around $y_j$.

Let $y^1_m$ be the component of $Y^1$ so that $\gamma'$ 1-circles around $y^1_m$ with respect to $X$. In other words, there is a subarc of $\gamma'$ that can be isotoped to coincide with a subarc $\beta$ of $y^1_m$ via an ambient isotopy supported in a regular neighborhood of $y^1_m$ and transverse to $X$ so that all intersections between $X$ and $y^1_m$ are contained in $\beta$. We will assume that this isotopy has been performed. Note that as the isotopy is transverse to $X$, it doesn't change the fact that $\gamma$ and $X$ intersect efficiently.

As $\gamma$ meets $Y$ nontrivially, there exists some annulus $A_l$ so that $\gamma \cap A_l \neq \emptyset$. Consider the intersection of $y^1_m$ with $A_l$. Since $2\leq |x_m \cap A_l|=|y^1_m \cap A_l|$ this intersection consists of at least two arcs. At least one of these arcs, $\alpha$, is completely contained in $\beta$, i.e., $\alpha \subset \gamma'$ so $\gamma'$ also 2-circles around $A_l$. As $\gamma$ is disjoint from $\gamma'$ and intersects $y_l$, it must 1-circle around $y_l$ (see Figure \ref{fig:almostcontained}).

\begin{figure}[h]
\labellist
\footnotesize \hair 2pt
\pinlabel \textcolor{magenta}{$\gamma'$} at 44 103
\pinlabel \textcolor{red}{$X$} at 95 95
\pinlabel \textcolor{blue}{$y_l$} at 47 78
\pinlabel $A_l$ at 0 26
\pinlabel \textcolor{green}{$\gamma$} at 76 95
\endlabellist
\centering \scalebox{1.5}{\includegraphics{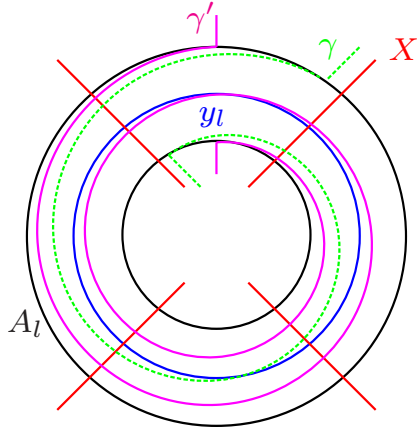}}
\caption{$\gamma$ 1-circles around $y_l$.}\label{fig:almostcontained}
\end{figure}
\end{proof}

The following lemma makes use of the fact that Dehn twisting a pants decomposition of a possibly marked surface $\Sigma$ about a collection of curves in $\Sigma$ still yields a pants decomposition of $\Sigma$.

\begin{lemma}\label{lem:seamed}
If $\mathcal{P}=\{\rho_1,\rho_2,...,\rho_s\}$ is a pants decomposition of an orientable possibly marked surface $\Sigma$ and $\gamma$ is a curve $k$-seamed with respect to $\mathcal{P}$, then
\begin{itemize}
\item $\gamma$ is $k$-seamed with respect to the pants decomposition $\tau^2_{\gamma}(\mathcal{P})$,
\item each curve in $\tau^2_{\gamma}(\mathcal{P})$ is $4k^2$-seamed with respect to $\mathcal{P}$,
\item and each curve in $\mathcal{P}$ is $4k^2$-seamed with respect to $\tau^2_{\gamma}(\mathcal{P})$.
\end{itemize}
\end{lemma}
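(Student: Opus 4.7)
The plan is to prove the three bullets in order, using the self-homeomorphism $\tau^2_\gamma$ of $\Sigma$ (which fixes $\gamma$ up to isotopy, as the twist is supported in an annular neighborhood of $\gamma$ and preserves its core) for the first and third bullets, and a direct parallel-copies count for the second.

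For the first bullet, I would apply the self-homeomorphism $\tau^2_\gamma$ to the pair $(\gamma, \mathcal{P})$, obtaining $(\gamma, \tau^2_\gamma(\mathcal{P}))$. Since $k$-seamedness is preserved by any self-homeomorphism acting simultaneously on the curve and the pants decomposition (efficient intersections map to efficient intersections, pants components map to pants components, and isotopy classes of seams are permuted), this immediately yields that $\gamma$ is $k$-seamed with respect to $\tau^2_\gamma(\mathcal{P})$.

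For the second bullet, fix $\rho \in \mathcal{P}$ and set $\rho^1 = \tau^2_\gamma(\rho)$. Since $\gamma$ is $k$-seamed, $|\rho \cap \gamma| \geq 2k$. Pick a thin annular neighborhood $A$ of $\gamma$; outside $A$, $\rho^1$ coincides with $\rho$ (which lies on $\partial P$ or is disjoint from $P$ for each pants component $P$), while inside $A$, each of the $\geq 2k$ crossings of $\rho$ with $\gamma$ has been replaced by an arc winding twice around $\gamma$. For a pants component $P$ of $\Sigma - \mathcal{P}$, all arcs of $\rho^1$ entering the interior of $P$ arise from these doubly-wound arcs. In each strip of $A \cap P$ around an arc $\alpha$ of $\gamma \cap P$, each doubly-wound arc contributes two sub-arcs parallel to (hence in the same seam class as) $\alpha$. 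Combining the $2$ parallel copies per crossing per strip, the $\geq 2k$ crossings, and the $\geq k$ arcs of $\gamma \cap P$ of each given seam type, one obtains $\geq 2 \cdot 2k \cdot k = 4k^2$ arcs of $\rho^1 \cap P$ of each seam type, giving $4k^2$-seamedness. For the third bullet, the same argument with $\tau^{-2}_\gamma$ in place of $\tau^2_\gamma$ shows that each curve in $\tau^{-2}_\gamma(\mathcal{P})$ is $4k^2$-seamed with respect to $\mathcal{P}$; pushing this statement through the self-homeomorphism $\tau^2_\gamma$ then yields that each curve in $\mathcal{P}$ is $4k^2$-seamed with respect to $\tau^2_\gamma(\mathcal{P})$.

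The main obstacle is rigorously justifying the parallel-copies count in bullet 2: tracking how each doubly-wound arc meets the strip of $A \cap P$ around each arc of $\gamma \cap P$, and verifying that $\rho^1$ intersects $\mathcal{P}$ efficiently so the seam count is honest. Both points reduce to a local analysis of the Dehn twist in $A$ combined with a standard bigon-reduction argument.
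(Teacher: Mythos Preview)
Your proposal is correct and follows essentially the same approach as the paper: the paper also proves the first bullet by noting that $\tau^2_\gamma$ fixes $\gamma$ (pointwise, which is your homeomorphism-invariance argument made concrete), proves the second bullet by the same $2k$-crossings-times-$2k$-seams count (phrased as ``$y^1_\ell$ contains $2k$ copies of every seam of $\mathcal{P}$ for each point in $\rho_\ell \cap \gamma$''), and derives the third bullet by applying the second-bullet argument with $\tau^{-2}_\gamma$ to the pants decomposition $\tau^2_\gamma(\mathcal{P})$, using the first bullet to know $\gamma$ is $k$-seamed there. Your route for the third bullet (twist $\mathcal{P}$ by $\tau^{-2}_\gamma$, then push forward by $\tau^2_\gamma$) is a trivially equivalent rephrasing; the paper likewise leaves the efficient-intersection verification in bullet~2 implicit, handling the delicate case of pants adjacent to $\rho_\ell$ by a figure.
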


\begin{proof} Let $y^1_{i}=\tau^2_{\gamma}(\rho_i)$ for each $i$. Observe that the Dehn twist about $\gamma$ can be chosen so that $\gamma$ is fixed pointwise.  So for each subarc of $\gamma$ connecting two components $\rho_{i}$ and $\rho_{j}$ of $\mathcal{P}$, the same subarc of $\gamma$ connects the components $y^1_{i}$ and $y^1_{j}$ of $\tau^2_{\gamma}(\mathcal{P})$.  Therefore every seam contained in $\gamma$ with respect to $\mathcal{P}$ is a seam contained in $\gamma$ with respect to $\tau^2_{\gamma}(\mathcal{P})$ and therefore $\gamma$ is $k$-seamed with respect to $\tau^2_{\gamma}(\mathcal{P})$.

Let $P$ be the closure of a component of $\Sigma-\mathcal{P}$ and let $\rho_{\ell}$ be a component of $\mathcal{P}$ in $\partial P$. Since $\gamma$ is $k$-seamed with respect to $\mathcal{P}$, $\gamma$ meets $\rho_{\ell}$ in at least $2k$ points.  By the definition of the Dehn twist operator, $y^{1}_{\ell}$ is formed by replacing each of the $2k$ distinct subarcs of $\rho_{\ell}$ corresponding to a neighborhood of $\rho_{\ell} \cap \gamma$ in $\rho_{\ell}$ with an arc that 2-circles around $\gamma$. Hence, $y^1_{\ell}$ contains $2k$ copies of every seam of $\mathcal{P}$ for each point in $\rho_{\ell} \cap \gamma$. This is clear for seams in pairs of pants not adjacent to $\rho_{\ell}$. To see it also holds for the seams in a pair of pants adjacent to $\rho_{\ell}$, see Figure \ref{fig:gooddehntwist}. Thus, every curve in $\tau^2_{\gamma}(\mathcal{P})$ is $4k^2$-seamed with respect to $\mathcal{P}$.

We can now apply $\tau^{-2}_{\gamma}$ to the pants decomposition $\tau^2_{\gamma}(\mathcal{P})$.  Since we showed $\gamma$ is $k$-seamed with respect to $\tau^2_{\gamma}(\mathcal{P})$, by the previous argument any curve in $\mathcal{P}$ is $4k^2$-seamed with respect to $\tau^{2}_{\gamma}(\mathcal{P})$.
\end{proof}

 \begin{figure}[h]
 \labellist
 \Huge \hair 2pt
 \pinlabel $\rho_j$ at 140 220
 \pinlabel \textcolor{cyan}{$y^1_j$} at 65 130
 \endlabellist
 \centering \scalebox{.6}{\includegraphics{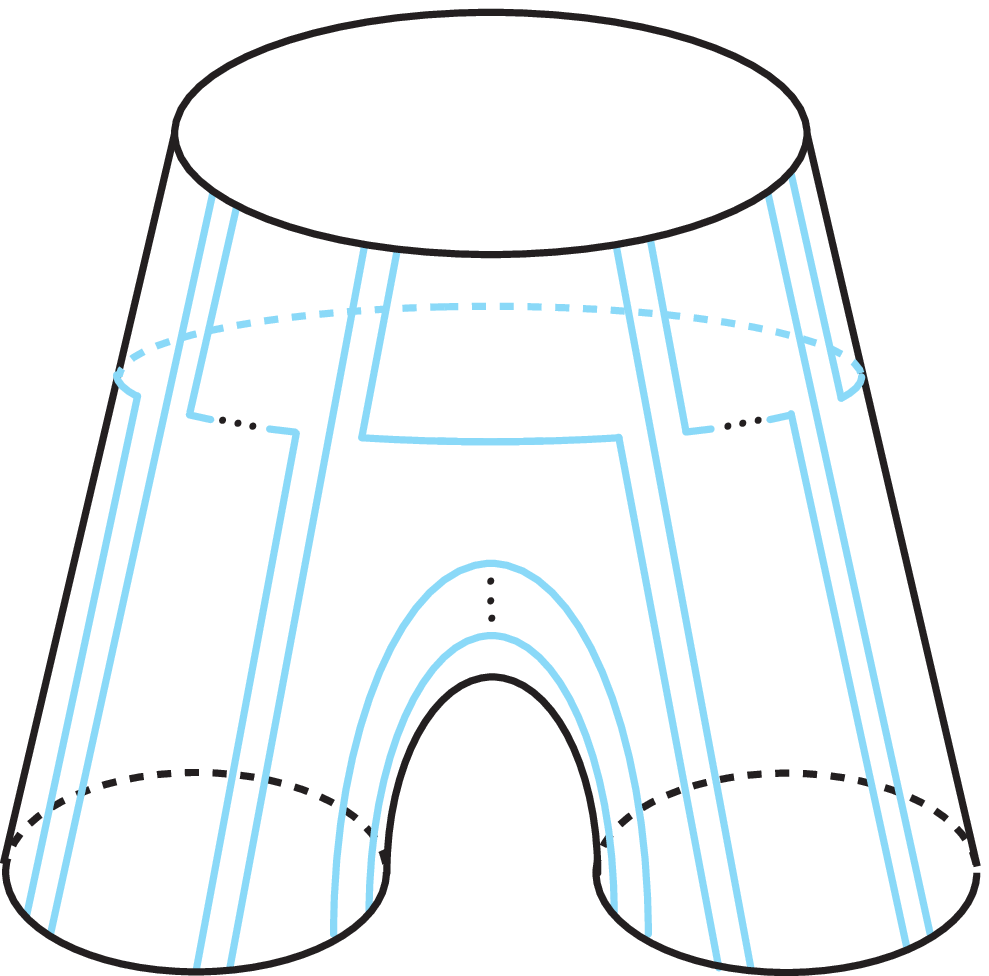}}
 \caption{}\label{fig:gooddehntwist}
 \end{figure}

\begin{lemma}\label{lem:doublestar}
Let $\Sigma$ be an orientable possibly marked surface.  Suppose $\mathcal{P} = \{\rho_{1},...,\rho_{s}\}$ and $\mathcal{R}= \{\omega_{1},...,\omega_{s}\}$ are pants decompositions for $\Sigma$ such that $\mathcal{P}$ and $\mathcal{R}$ are 2-seamed with respect to each other.  Let $\gamma$ and $\gamma'$ be essential simple closed curves in $\Sigma$ such that each meets $\mathcal{R}$ efficiently, $\gamma \cap \gamma' = \emptyset$, and $\gamma$ meets $\mathcal{P}$ efficiently.  If there exists a component $y^1_{m}$ of $\tau^{2}_{\mathcal{R}}(\mathcal{P})$ such that $\gamma'$ 1-circles around $y^1_{m}$ with respect to $\mathcal{P}$ then $\gamma$ must intersect $\mathcal{R}$ nontrivially.
\end{lemma}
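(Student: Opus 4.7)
The plan is to argue by contradiction: assume $\gamma\cap\mathcal{R}=\emptyset$ and derive a contradiction.

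First, I would carry out a topological reduction. Since $\gamma$ is an essential simple closed curve disjoint from $\mathcal{R}$, it lies in a single component $Q$ of $\Sigma-\mathcal{R}$. Every such $Q$ is either a pair of pants or a $2$-marked disk, and in both cases every essential simple closed curve in $Q$ is boundary-parallel. Hence $\gamma$ is isotopic to some $\omega_{j}\in\mathcal{R}$. I would fix an annular neighborhood $N$ of $\omega_{j}$ large enough to contain $\gamma$, so that $\gamma$ is a core-parallel curve in $N$ which separates $N$ into two sub-annuli.

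Next, I would unpack the 1-circling hypothesis by mimicking the setup in the proof of Lemma \ref{almostcontained}. Choose pairwise disjoint annular neighborhoods $A_{l}$ of each $\omega_{l}\in\mathcal{R}$ so that each $\rho_{i}\cap A_{l}$ is a collection of $I$-fibers. By construction, whenever $\rho_{m}\cap\omega_{l}\neq\emptyset$, the curve $y^{1}_{m}=\tau^{2}_{\mathcal{R}}(\rho_{m})$ contains an arc in $A_{l}$ that 2-circles around $\omega_{l}$. The 1-circling of $\gamma'$ around $y^{1}_{m}$ with respect to $\mathcal{P}$ yields a subarc $\beta\subset\gamma'$ which, after an ambient isotopy supported in a neighborhood of $y^{1}_{m}$ and transverse to $\mathcal{P}$, coincides with a subarc of $y^{1}_{m}$ that traverses every point of $y^{1}_{m}\cap\mathcal{P}$. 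The 2-seamed hypothesis on $\mathcal{R}$ with respect to $\mathcal{P}$ forces $\rho_{m}$ to meet $\mathcal{R}$, so some $A_{l}$ is nontrivially entered by $y^{1}_{m}$; then $\beta$ contains a 2-circling arc in $A_{l}$, which means $\gamma'$ 2-circles around $\omega_{l}$ with respect to $\mathcal{P}$.

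The contradiction then comes from combining $\gamma\simeq\omega_{j}$, the disjointness $\gamma\cap\gamma'=\emptyset$, and the 2-circling of $\gamma'$. When $\omega_{l}=\omega_{j}$, the argument is immediate: the 2-circling arc of $\gamma'$ in $A_{j}$ runs from one boundary component of $A_{j}$ to the other, and therefore crosses any curve that separates $A_{j}$ into two annuli. Since $\gamma$ sits inside $N\supset A_{j}$ as such a separating curve parallel to $\omega_{j}$, this 2-circling arc must cross $\gamma$, contradicting $\gamma\cap\gamma'=\emptyset$. The role of the 2-seamed condition on $\mathcal{P}$ with respect to $\mathcal{R}$ is to force the $\omega_{l}$ produced above to be $\omega_{j}$ (or to produce a parallel contradiction when $\omega_{l}\neq\omega_{j}$) by exploiting the abundance of arcs of $\mathcal{P}$ in each seam class of the pants component $Q$ adjacent to $\omega_{j}$.

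The main obstacle I expect is the last step---justifying that the 2-circling of $\gamma'$ can be arranged to occur at $\omega_{j}$, or equivalently that no other choice of $\omega_{l}$ is compatible with the disjointness of $\gamma$ from $\gamma'$ and with $\gamma'$'s required 2-circling behavior. This is precisely where both directions of the 2-seamed hypothesis must be used together, and handling it cleanly is the technical heart of the argument.
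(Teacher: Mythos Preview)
Your initial reduction is correct and matches the paper: if $\gamma \cap \mathcal{R} = \emptyset$ then $\gamma$ is parallel to some $\omega_{j} \in \mathcal{R}$. From there, however, your approach diverges, and the step you yourself flag as the ``main obstacle'' is a genuine gap. Re-running the mechanism of Lemma~\ref{almostcontained} shows only that $\gamma'$ 2-circles around \emph{some} $\omega_{l}$; nothing in that argument forces $l=j$, and your appeal to ``the abundance of arcs of $\mathcal{P}$ in each seam class of the pants component $Q$ adjacent to $\omega_{j}$'' is not an argument but a hope. If $l\neq j$, a 2-circling arc of $\gamma'$ inside $A_{l}$ need not meet $\omega_{j}$ at all, so no contradiction follows.

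The paper sidesteps this difficulty with a different idea: instead of pinning down one $\omega_{l}$, it shows that $\gamma'$ must meet \emph{every} curve of $\mathcal{R}$. The mechanism is: (i) by Lemma~\ref{lem:seamed}, $y^{1}_{m}$ is itself 2-seamed with respect to $\mathcal{R}$, so $|y^{1}_{m}\cap\omega_{j}|\geq 4$ for every $j$; (ii) because each $\omega_{l}$ meets $\mathcal{P}$ (in at least two points), one can arrange that every component of $y^{1}_{m}-\mathcal{R}$ contains at least one point of $y^{1}_{m}\cap\mathcal{P}$; (iii) the 1-circling hypothesis gives a subarc $\beta\subset\gamma'$ containing all of $y^{1}_{m}\cap\mathcal{P}$, and by (ii) this forces $\beta$ to miss at most one point of $y^{1}_{m}\cap\mathcal{R}$. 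Combining with (i), $\gamma'$ intersects $\omega_{j}$ at least twice for every $j$, contradicting $\gamma'\cap\gamma=\emptyset$. Step (ii) is the insight your proposal is missing: it converts global control over $\beta\cap\mathcal{P}$ into global control over $\beta\cap\mathcal{R}$, so no specific $\omega_{l}$ ever needs to be singled out.
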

\begin{proof}
Let $\rho_{m}$ be the component of $\mathcal{P}$ such that $y^1_{m} = \tau^{2}_{\mathcal{R}}(\rho_{m})$.  By hypothesis, each component of $\mathcal{P}$, and in particular $\rho_{m}$, is 2-seamed with respect to $\mathcal{R}$.  So by Lemma \ref{lem:seamed}, $y^1_{m}$ is also 2-seamed with respect to $\mathcal{R}$.  Moreover, as each component of $\mathcal{R}$ intersects $\mathcal{P}$ at least twice, we can isotope $y^1_{m}$ so that each component of $y^1_{m} - \mathcal{R}$ contains at
least one point of intersection with $\mathcal{P}$ (see Figure \ref{fig:doublestar2}).

\begin{figure}[h]
\labellist
\footnotesize \hair 2pt
\pinlabel $\mathcal{R}$ at 32 88
\pinlabel $\mathcal{R}$ at 92 88
\pinlabel \textcolor{blue}{$y^1_{m}$} at 122 60
\pinlabel $\rho_{m}$ at 0 43
\endlabellist
\centering \scalebox{1.2}{\includegraphics{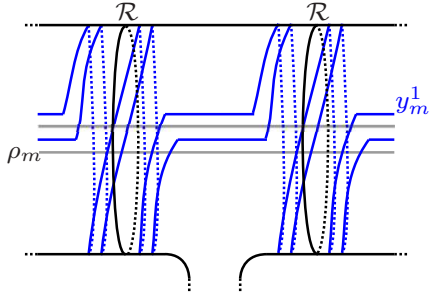}}
\caption{Each component of $y^1_{m} - \mathcal{R}$ intersects $\mathcal{P}$.}\label{fig:doublestar2}
\end{figure}

As $\gamma'$ 1-circles around $y^1_{m}$ with respect to $\mathcal{P}$, there exists a subarc of $\gamma'$ that can be isotoped to coincide with a subarc $\beta$
of $y^1_{m}$ such that $\beta$ contains all points of intersection between $\mathcal{P}$ and $y^1_{m}$.  Since each component of $y^1_m-\mathcal{R}$ contains at least one point of intersection with $\mathcal{P}$, $\beta$ also contains all but at most one point of intersection between $\mathcal{R}$ and $y^1_{m}$.  Thus, $\beta$ (and consequently $\gamma'$) contains at most two fewer seams than $y^1_{m}$ with respect to $\mathcal{R}$.

If we suppose $\gamma$ is disjoint from $\mathcal{R}$, then it must lie parallel to a component of $\mathcal{R}$.  Since $y^1_{m}$ is 2-seamed with respect to $\mathcal{R}$, $\gamma$ must intersect $y^1_{m}$ at least 4 times.  Hence $\gamma$ intersects $\gamma'$ at least twice, a contradiction.
\end{proof}

\section{Main Theorem}

\begin{defin}
The \emph{curve complex} for a marked surface $\Sigma$ is the complex with vertices corresponding to the isotopy classes of essential simple closed curves in $\Sigma$ and a collection of vertices $v_0, v_1, ..., v_k$ defines a $k$-simplex if representatives of the corresponding isotopy classes can be chosen to be pairwise disjoint.  We will denote the curve complex for a surface $\Sigma$ by $\mathcal{C}(\Sigma)$.
\end{defin}

\begin{defin}
Given two collections of essential simple closed curves $X$ and $Y$ in $\Sigma$, the \emph{distance between $X$ and $Y$}, denoted $dist(X,Y)$, is the minimal number of edges in a path in $\mathcal{C}(\Sigma)$ from a vertex corresponding to a curve in $X$ to a vertex corresponding to a curve in $Y$.
\end{defin}

\begin{defin} \label{def:create_manifold}
Let $\Sigma$ be a $2b$-marked surface. If $g(\Sigma)=0$ we require that $b\geq 3$ and if $g(\Sigma)=1$ we require that $b\geq 1$. Let $\mathcal{P}$ and $\mathcal{P}'$ be pants decompositions for $\Sigma$.  Then $V_{\mathcal{P}}$ and $V_{\mathcal{P}'}$ are handlebodies containing a collection of boundary parallel arcs and $V_{\mathcal{P}} \cup_{\Sigma} V_{\mathcal{P}'}$ is a closed orientable manifold $M$ containing a (possibly empty) link $L$ with bridge surface $\Sigma$.  Recall that $K_{\mathcal{P}}$ (resp. $K_{\mathcal{P}'}$) is the set of all essential simple closed curves in the marked surface $\Sigma$ that bound embedded disks in $V_{\mathcal{P}}$ (resp. $V_{\mathcal{P}'}$) that are disjoint from $L$.  Then the {\em distance of the bridge surface $\Sigma$}, denoted $dist(\Sigma,L)$, is equal to $dist(K_{\mathcal{P}},K_{\mathcal{P}'})$.
\end{defin}

We will make extensive use of the following basic facts regarding collections of curves on a surface. The proofs of these facts are left as an exercise to the reader.\\

\textbf{Fact 1:} Given essential collections of curves $X_1$, $X_2$, ..., $X_n$ on a marked surface $\Sigma$ such that all pairwise intersections are efficient except for  $X_i \cap X_j$, there exists an isotopy of $X_i$ fixing all other collections of curves setwise and resulting in all pairwise intersections being efficient.\\

\textbf{Fact 2:} If $X_1$, $X_2$, and $Y$ are collections of curves on a marked surface $\Sigma$ such that $X_1$ intersects $Y$ efficiently, $X_2$ intersects $Y$ efficiently, and $X_1$ is isotopic to $X_2$, then there is an ambient isotopy from $X_1$ to $X_2$ fixing $Y$ setwise.\\

\begin{defin}

Let $D$ be a disk with $2b$ marked points. The \emph{standard} pants decomposition for $D$ is depicted in Figure \ref{fig:standarddisk}. The \emph{basic warped} pants decomposition for $D$ is depicted in Figure \ref{fig:warpdisk}.
\end{defin}

 \begin{figure}[ht]
 \labellist
 \Huge \hair 2pt
 \pinlabel \textcolor{blue}{$\gamma_s$} at -10 85
 \endlabellist
 \centering \scalebox{.8}{\includegraphics{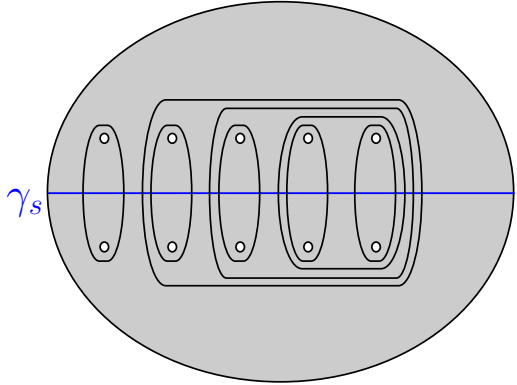}}
 \caption{The standard pants decomposition for a 10-marked disk.}\label{fig:standarddisk}
 \end{figure}

 \begin{figure}[ht]
 \labellist
 \Huge \hair 2pt
 \pinlabel \textcolor{blue}{$\gamma_s$} at -10 85
 \endlabellist
 \centering \scalebox{.8}{\includegraphics{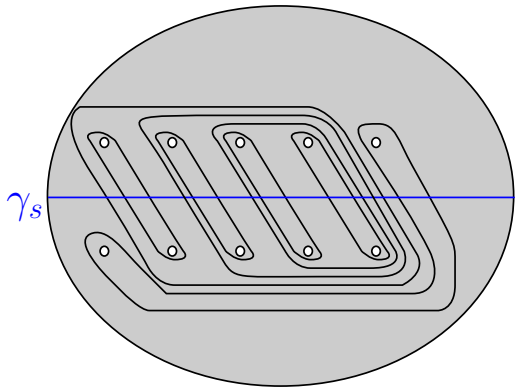}}
 \caption{The basic warped pants decomposition for a 10-marked disk.}\label{fig:warpdisk}
 \end{figure}

\begin{defin}
Let $D$ be a disk with $2b$ marked points and, for $i \geq 2$, let $b_1, b_2, ..., b_i$ be an ordered collection of even integers such that $b_1+ b_2+ ... + b_i=2b$. Let $E$ be a $2i$-marked disk with the standard pants decomposition $\mathcal{Q}$. Number each of the $2$-marked disk components of $E-\mathcal{Q}$ from $1$ to $i$. Replace the $j$th $2$-marked disk component of $E-\mathcal{Q}$ with a $b_j$-marked disk with basic warped pants decomposition $\mathcal{B}_j$.  Then we define the \emph{$\{b_1, b_2, ..., b_i\}$-warped} pants decomposition of $D$ to be $\mathcal{Q}\cup (\cup_{j=1}^{i} \mathcal{B}_j)$. See Figure \ref{fig:bbwarpdisk}.  Note that the \emph{$\{2b\}$-warped} pants decomposition of $D$ will just refer to the basic warped pants decomposition for a $2b$-marked disk.
\end{defin}

 \begin{figure}[h]
 \labellist
 \Huge \hair 2pt
 \pinlabel \textcolor{blue}{$\gamma_s$} at 28 134
 \endlabellist
 \centering \scalebox{.6}{\includegraphics{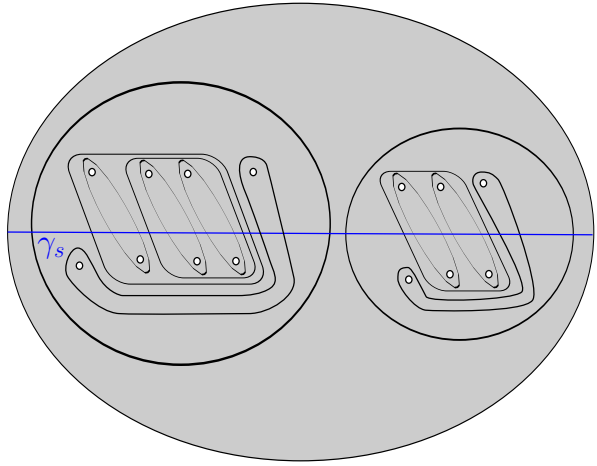}}
 \caption{An $\{8,6\}$-warped pants decomposition for the marked disk.}\label{fig:bbwarpdisk}
 \end{figure}

\begin{defin}
If $S$ is an unmarked, closed surface we define the \emph{seed curve} $\gamma_s \subset S$ in the following ways.

\begin{itemize}
\item If $S$ is a sphere, then $\gamma_s$ is any simple closed curve in $S$.

\item If $S$ is a torus with meridian $m$ and longitude $l$, then $\gamma_s$ is a simple closed curve of slope $\dfrac{1}{2}$ (i.e., $\gamma_s$ is homologous to $m + 2l$).

\item If $S$ has genus $2$ or higher and $\mathcal{R}$ is a pants decomposition for $S$, then $\gamma_s$ is any curve that intersects each pants component of $S - \mathcal{R}$ in exactly 3 mutually non-isotopic seams.
\end{itemize}
\end{defin}

Given a surface $S$ of genus at least 2 with pants decomposition $\mathcal{R}$, we can construct a seed curve $\gamma_s$ in the following way.  For each pants component $P$ determined by $\mathcal{R}$, properly embed 3 disjoint arcs in $P$ so that they are mutually non-isotopic seams in $P$.  This produces a collection of seams embedded in $S$ such that each component of $\mathcal{R}$ contains the endpoints of exactly four seams, with two seams lying on either side.  Then for each component $q \in \mathcal{R}$, we can isotope the four endpoints on $q$ along $q$ so that the endpoint of each seam agrees with the endpoint of exactly one other seam on the other side $q$ and the seams remain pairwise disjoint off of $\mathcal{R}$.  Observe that there are exactly two ways to pair up the four seams adjacent to each component of $\mathcal{R}$.  The result is a properly embedded 1-manifold $\gamma$ in $S$ that contains 3 mutually non-isotopic seams in each pants component of $S - \mathcal{R}$.  If $\gamma$ is disconnected, then since $S$ is connected there exists some $p \in \mathcal{R}$ that intersects two distinct components of $\gamma$.  Surger $\gamma$ along $p$ so that the opposite pairing of arcs at $p$ is obtained.  Note that this strictly reduces the number of components of $\gamma$ by one.  Repeat surgeries if necessary until $\gamma$ is a single component and then set $\gamma_s = \gamma$.

\begin{defin}\label{Xprime}
Let $S$ be a closed surface of genus at least 2, $\mathcal{R}$ a pants decomposition for $S$, and $\alpha \in \mathcal{R}$. Let $\alpha'$ be a curve in $S$ parallel to $\alpha$ and disjoint from $\mathcal{R}$. Let $E$ be the closed regular neighborhood of a point in $\gamma_s$ that lies in the annulus bounded by $\alpha$ and $\alpha'$. Let $\Sigma$ be the surface obtained from $S$ by replacing $E$ with a $2b$-marked disk $D$. Then there are two key pants decompositions for $\Sigma$:

\begin{itemize}
\item The {\em standard pants decomposition of $\Sigma$ induced by $\mathcal{R}$} is $\mathcal{P}=\mathcal{R} \cup \alpha' \cup \mathcal{R}_{D}$ where $\mathcal{R}_{D}$ is the standard pants decomposition for $D$.
\item The {\em $\{b_1, b_2, ..., b_i\}$-warped pants decomposition of $\Sigma$ induced by $\mathcal{R}$} is $\mathcal{P}'=\mathcal{R} \cup \alpha' \cup \mathcal{R}'_{D}$ where $\mathcal{R}'_D$ is a $\{b_1, b_2, ..., b_i\}$-warped pants decomposition for $D$.
\end{itemize}

Let $S$ be a torus with meridian $m$ and longitude $l$. Let $E$ be the closed regular neighborhood of a point in $\gamma_s$ disjoint from $m$. Let $\Sigma$ be the surface obtained from $S$ by replacing $E$ with a $2b$-marked disk $D$. Then there are two key pants decompositions for $\Sigma$:

\begin{itemize}
\item The {\em standard pants decomposition of $\Sigma$ induced by $(m,l)$} is $\mathcal{P}=m \cup \mathcal{R}_{D}$ where $\mathcal{R}_{D}$ is the standard pants decomposition for $D$.
\item The {\em $\{b_1, b_2, ..., b_i\}$-warped pants decomposition of $\Sigma$ induced by $(m,l)$} is $\mathcal{P}'=m \cup \mathcal{R}'_{D}$ where $\mathcal{R}'_D$ is a $\{b_1, b_2, ..., b_i\}$-warped pants decomposition for $D$.
\end{itemize}

Let $S$ be a sphere. Let $E$ be the closed regular neighborhood of a point in $\gamma_s$. Let $\Sigma$ be the surface obtained from $S$ by replacing $E$ with a $2b$-marked disk $D$. Then there are two key pants decompositions for $\Sigma$:

\begin{itemize}
\item The {\em standard pants decomposition of $\Sigma$} is $\mathcal{P}$, obtained from the standard pants decomposition for $D$ by removing curves that become inessential or redundant after inclusion. See Figure \ref{fig:standardsphere}.
\item The {\em $\{b_1, b_2, ..., b_i\}$-warped pants decomposition of $\Sigma$} is $\mathcal{P}'$, obtained from the $\{b_1, b_2, ..., b_i\}$-warped pants decomposition for $D$ by removing curves that become inessential or redundant after inclusion.
\end{itemize}

\end{defin}

 \begin{figure}[h]
 \labellist
 \Huge \hair 2pt
 \pinlabel \textcolor{blue}{$\gamma_s$} at 98 58
 \endlabellist
 \centering \scalebox{.6}{\includegraphics{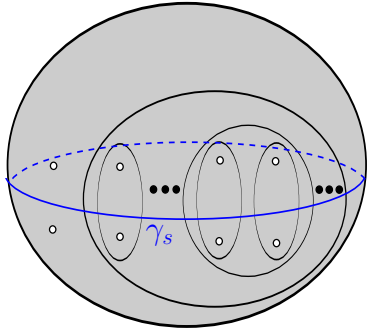}}
 \caption{$\mathcal{P}$ for a marked sphere.}\label{fig:standardsphere}
 \end{figure}

\begin{rmk}\label{rmk:curve}
In each of the pants decompositions described in the previous definition, we insist that $\gamma_s$ persists as a curve in $\Sigma$ that meets $D$ in an arc as depicted in Figure \ref{fig:standarddisk} or Figure \ref{fig:bbwarpdisk}.
\end{rmk}

Note that in our discussion we allow surfaces of genus greater than 1 to not have any marked points. In this case $D=\emptyset$ and the warped and the standard pants decompositions are identical to each other.

The following theorem is our version of Theorem 4.4 in \cite{Ev}.

\begin{theorem}\label{dist}
Let $\Sigma$ be an orientable surface of genus $g$ with $2b$ marked points. If $\Sigma$ is a sphere, assume $b \geq 3$ and if $\Sigma$ is a torus, assume $b \geq 1$. Let $\gamma_s$ be the seed curve for $\Sigma$ with pants decompositions $\mathcal{P}$ and $\mathcal{P}'$ as in Definition \ref{Xprime}. For $n\geq 1$ let

$$Y^0=\tau^2_{\gamma_s}(\mathcal{P})=\{y^0_1,...,y^0_{3g-3+2b}\}$$
$$Y^1=\tau^2_{Y^0}(\mathcal{P})=\{y^1_1,...,y^1_{3g-3+2b}\}$$
$$\vdots$$
$$Y^n=\tau^2_{Y^{n-1}}(\mathcal{P})=\{y^n_1,...,y^n_{3g-3+2b}\}$$

Then $dist(K_{\mathcal{P}'},K_{Y^n})\geq n$ and $dist(K_{\mathcal{P}},K_{Y^n})\geq n$.
\end{theorem}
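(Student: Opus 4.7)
The plan is to argue by contradiction: suppose there is a path $\gamma_{0},\gamma_{1},\ldots,\gamma_{d}$ in $\mathcal{C}(\Sigma)$ with $d\leq n-1$, $\gamma_{d}\in K_{Y^{n}}$, and $\gamma_{0}\in K_{\mathcal{P}}$ (or $\gamma_{0}\in K_{\mathcal{P}'}$), and derive a contradiction. The preliminary step is a seaming setup: the seed curve $\gamma_{s}$ is $1$-seamed with respect to both $\mathcal{P}$ and $\mathcal{P}'$ (essentially by Definition \ref{Xprime} and Remark \ref{rmk:curve}), and iterating the natural collection-valued version of Lemma \ref{lem:seamed} then shows that for every $k\geq 1$, $\mathcal{P}$ and $Y^{k-1}$ are $2$-seamed with respect to each other (and $i(\rho,y^{k-1}_{\ell})\geq 2$ for all $\rho\in \mathcal{P}$ and $y^{k-1}_{\ell}\in Y^{k-1}$), supplying the hypotheses needed for Lemmas \ref{lem:doublestar} and \ref{almostcontained}; an analogous statement holds with $\mathcal{P}'$ in place of $\mathcal{P}$.

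The core technical step is to prove, by downward induction on $j\in\{0,\ldots,d\}$, that $\gamma_{d-j}$ $1$-circles around some component of $Y^{n-1-j}$ with respect to $\mathcal{P}$. For the base case $j=0$, Lemma \ref{compdiskwave} applied to $\gamma_{d}\in K_{Y^{n}}$ yields two subcases. If $\gamma_{d}$ is isotopic to a component $y^{n}_{m}=\tau^{2}_{Y^{n-1}}(\rho_{m})$ of $Y^{n}$, the explicit Dehn-twist construction makes $y^{n}_{m}$ $2$-circle around any $y^{n-1}_{\ell}\in Y^{n-1}$ that meets $\rho_{m}$ (and at least one such $\ell$ exists by the seamed property), giving the required $1$-circling. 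If $\gamma_{d}$ contains a wave of a pants component of $\Sigma-Y^{n}$, one traces the wave through the annular neighborhoods of $Y^{n-1}$, using that each boundary curve of the pants component is a $2$-circling of some $\mathcal{P}$-curve around $Y^{n-1}$, to see that the wave still $1$-circles around some $y^{n-1}_{\ell}$ with respect to $\mathcal{P}$. For the inductive step, given that $\gamma_{d-j}$ $1$-circles around a component of $Y^{n-1-j}=\tau^{2}_{Y^{n-2-j}}(\mathcal{P})$ with respect to $\mathcal{P}$ and $\gamma_{d-j-1}\cap\gamma_{d-j}=\emptyset$, Lemma \ref{lem:doublestar} (with $\mathcal{R}=Y^{n-2-j}$) forces $\gamma_{d-j-1}$ to meet $Y^{n-2-j}$ nontrivially, and Lemma \ref{almostcontained} (with $X=\mathcal{P}$ and $Y=Y^{n-2-j}$) promotes this to $\gamma_{d-j-1}$ $1$-circling around a component of $Y^{n-2-j}$ with respect to $\mathcal{P}$.

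Setting $j=d$, $\gamma_{0}$ $1$-circles around some component of $Y^{n-1-d}$ with respect to $\mathcal{P}$, where $n-1-d\geq 0$. Since $\gamma_{0}\in K_{\mathcal{P}}$ (resp.\ $K_{\mathcal{P}'}$), Lemma \ref{compdiskwave} says $\gamma_{0}$ is isotopic to a curve of $\mathcal{P}$ (resp.\ $\mathcal{P}'$) or contains a wave of a pants component of $\Sigma-\mathcal{P}$ (resp.\ $\Sigma-\mathcal{P}'$). A curve isotopic to $\mathcal{P}$ (resp.\ $\mathcal{P}'$) has a representative disjoint from the corresponding pants decomposition and so cannot cross the $I$-fibers needed for $1$-circling; a wave is confined to a single pants component and so meets the pants decomposition too sparsely to realize the $1$-circling of a heavily Dehn-twisted curve in $Y^{n-1-d}$, with the $\mathcal{P}'$ case handled using that $\mathcal{P}'$ agrees with $\mathcal{P}$ outside the marked disk $D$ and that the warped decomposition inside $D$ is similarly simple. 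Either outcome contradicts the $1$-circling established in the induction. The main obstacle will be the wave subcase in both the base step and the final contradiction, which requires a careful geometric analysis of how an arc confined to one pants component interacts with the annular neighborhoods of iteratively Dehn-twisted curves; the remaining steps parallel the blueprint in Evans \cite{Ev}, suitably adapted to the marked-surface setting through Lemmas \ref{lem:seamed}, \ref{lem:doublestar}, \ref{almostcontained}, and \ref{compdiskwave}.
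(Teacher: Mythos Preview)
Your overall architecture matches the paper: contradiction via a short path, Lemma~\ref{compdiskwave} at the $\gamma_d$ end, downward induction through Lemmas~\ref{lem:doublestar} and~\ref{almostcontained} to get $\gamma_0$ $1$-circling around some $y^{n-(d+1)}_{l}$ with respect to $\mathcal{P}$, and Lemma~\ref{compdiskwave} again at the $\gamma_0$ end. The gap is in the final contradiction.

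You argue that a wave of $\Sigma-\mathcal{P}'$ ``is confined to a single pants component and so meets the pants decomposition too sparsely to realize the $1$-circling.'' This conflates two different subarcs of $\gamma_0$. The wave $\omega_0\subset\gamma_0$ lies in a single $\mathcal{P}'$-pants component, but the $1$-circling arc is a (generally different) subarc of $\gamma_0$ running around the annulus of $y^{n-(d+1)}_{l}$; moreover the circling is measured against $\mathcal{P}$, not $\mathcal{P}'$. Nothing you have written prevents $\gamma_0$ from containing a short $\mathcal{P}'$-wave in one place while a long subarc elsewhere does the $1$-circling. The same objection applies even in the simpler $\mathcal{P}$ case.

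The paper closes this gap by a different mechanism: it proves that $\gamma_0$ is $1$-\emph{seamed} with respect to $\mathcal{P}'$. This needs two ingredients you do not supply. First (Claim~1), one must show every curve of every $Y^k$ is $2$-seamed with respect to $\mathcal{P}'$ and meets the marked disk $D$ in at least two arcs parallel to $\gamma_s\cap D$; this uses the explicit form of $\mathcal{P}$ and $\mathcal{P}'$ inside $D$, not merely that they agree outside $D$. Second (Claim~2), one builds a curve $y^*$ isotopic to $y^{n-(d+1)}_{l}$ by closing the $1$-circling subarc of $\gamma_0$ with a short arc $\alpha$ lying in a single component of $A-\mathcal{P}$, and then carries out a bigon-elimination argument to make $y^*$ meet $\mathcal{P}'$ efficiently while preserving all the other efficiency criteria. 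Only then can one transport the $2$-seamedness of $y^{n-(d+1)}_{l}$ to conclude that $\gamma_0\supset y^*\setminus\alpha$ is $1$-seamed with respect to $\mathcal{P}'$. Once that is established, the contradiction is clean: a $1$-seamed curve contains a seam joining the two boundary components of any pants piece that the wave $\omega_0$ separates, so $\gamma_0$ would have to cross its own subarc $\omega_0$. Your sketch gestures at the disk $D$ but does not provide either Claim~1 or the $y^*$/efficiency step, and without them the final implication does not follow.
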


\begin{proof} In search of a contradiction, assume $dist(K_{\mathcal{P}'},K_{Y^n})=d$ where $d\leq n-1$. Hence, there exist curves $\gamma_0,\gamma_1,...,\gamma_d$ such that $\gamma_0 \in K_{\mathcal{P}'}$ and $\gamma_d \in K_{Y^n}$ and $\gamma_i \cap \gamma_{i+1}=\emptyset$ for each $i$. By Lemma \ref{compdiskwave}, $\gamma_d$ is isotopic to a curve in $Y^n$ or $\gamma_d$ contains a wave of a pair of pants component of $\Sigma-Y^n$. Let $\omega$ be the isotopic copy of $\gamma_d$ that is disjoint from $Y^n$ if we are in the first case and let $\omega$ be the wave subarc of $\gamma_d$ if we are in the second case.

By inductively applying Lemma \ref{lem:seamed}, every curve of $Y^{n-1}$ is 2-seamed with respect to $Y^n$. Hence, $y^{n-1}_{l}\cap \omega \neq \emptyset$ for all $l$.

By construction, $y^n_j$ 2-circles around $y^{n-1}_{l}$ with respect to $\mathcal{P}$ for every $j$ and $l$. Fix an $r$ and $s$, then $y^n_r$ 2-circles around $y^{n-1}_{s}$ with respect to $\mathcal{P}$. As illustrated in Figure \ref{fig:omega}, since the interior of $\omega$ is disjoint from $y^n_r$, $y^n_r$ 2-circles around $y^{n-1}_{s}$ with respect to $\mathcal{P}$, and $y^{n-1}_{s}\cap \omega \neq \emptyset$, it follows that $\gamma_d$ 1-circles around $y^{n-1}_s$. As $\gamma_i \cap \gamma_{i+1}=\emptyset$ for each $i$ and we know by Lemma \ref{lem:doublestar} that the hypotheses of Lemma \ref{almostcontained} hold, we can inductively apply Lemma \ref{almostcontained} to conclude that $\gamma_0$ 1-circles around $y^{n-(d+1)}_{l}$ with respect to $\mathcal{P}$ for some fixed $l$.

\begin{figure}[h]
\labellist
\footnotesize \hair 2pt
\pinlabel \textcolor{blue}{$y_s^{n-1}$} at 50 102
\pinlabel \textcolor{green}{$\omega$} at 35 75
\pinlabel \textcolor{magenta}{$y_r^{n}$} at 70 57
\pinlabel \textcolor{green}{$\omega$} at 40 45
\pinlabel \textcolor{blue}{$y_s^{n-1}$} at 23 18
\pinlabel \textcolor{magenta}{$y_r^{n}$} at 72 3
\pinlabel \textcolor{magenta}{$y_r^{n}$} at 142 108
\pinlabel \textcolor{green}{$\omega$} at 178 100
\pinlabel \textcolor{red}{$\mathcal{P}$} at 186 95
\pinlabel \textcolor{blue}{$y_s^{n-1}$} at 140 78
\endlabellist
\centering \scalebox{1.5}{\includegraphics{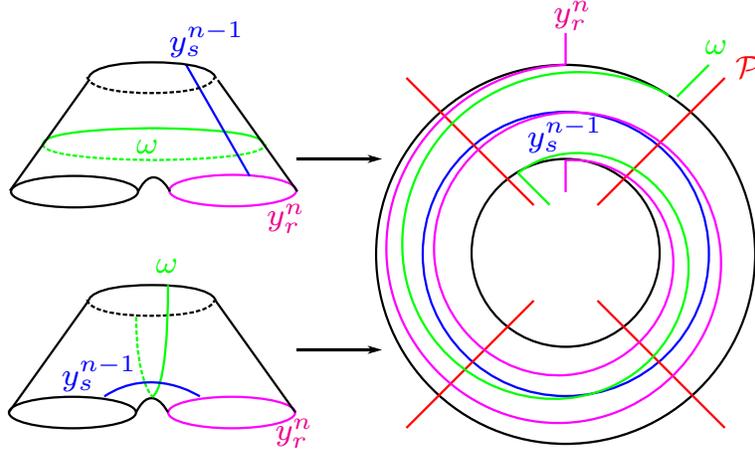}}
\caption{In either case, $\omega$ (and therefore $\gamma_d$) 1-circles around $y^{n-1}_s$.}\label{fig:omega}
\end{figure}

We now describe how to simultaneously establish the following list of criteria.

\begin{enumerate}
\item $\gamma_0$ 1-circles around $y^{n-(d+1)}_{l}$ with respect to $\mathcal{P}$.

\item $y^{n-(d+1)}_{l}$ has efficient intersection with $\mathcal{P}$.

\item $\gamma_0$ has efficient intersection with $\mathcal{P}$.

\item $\gamma_0$ has efficient intersection with $y^{n-(d+1)}_{l}$.

\item $\mathcal{P}$ has efficient intersection with $\mathcal{P}'$.

\item $\gamma_0$ has efficient intersection with $\mathcal{P}'$.

\item $y^{n-(d+1)}_{l}$ has efficient intersection with $\mathcal{P}'$.

\end{enumerate}

Criteria 2, 3, and 4 follow from criteria 1 by the definition of $k$-circling. Criteria 5 follows from Fact 1 applied to $\mathcal{P}$ and $\mathcal{P}'$. Criteria 6 follows from Fact 1 applied to $\mathcal{P}$, $\mathcal{P}'$, and $\gamma_0$. Criteria 7 follows from Fact 1 applied to $\mathcal{P}$, $\mathcal{P}'$, $\gamma_0$, and $y^{n-(d+1)}_{l}$. Notice that in each case we can choose to isotope $\mathcal{P}'$, thus preserving criteria 1. Hence, we can assume that all of the above criteria simultaneously hold.

Since $\gamma_0 \in K_{\mathcal{P}'}$, by Lemma \ref{compdiskwave} $\gamma_0$ is contained in some component of $\Sigma-\mathcal{P}'$ or contains a wave of a pair of pants component of $\mathcal{P}'$. If we are in the first case, $\omega_0$ refers to $\gamma_0$. If we are in the second case, $\omega_0$ refers to some wave of $\mathcal{P}'$ contained in $\gamma_0$. In either case, $\gamma_0$ will intersect any curve that is 1-seamed with respect to $\mathcal{P}'$.

Recall that $D$ refers to the $2b$-marked disk used to define $\mathcal{P}$ and $\mathcal{P'}$ in Definition \ref{Xprime}.

\medskip

\noindent Claim 1: Every curve $y$ in $Y^n$ meets $D$ in at least two arcs isotopic to $\gamma_s \cap D$ and is $2$-seamed with respect to $\mathcal{P}'$ for each $n$.

\noindent Proof: Suppose $y = \tau^2_{\gamma_s}(\rho_i)$ is a curve in $Y^0$. Since $\mathcal{P}$ and $\mathcal{P}'$ are identical outside of $D$, for each pants component $P$ of $\Sigma - \mathcal{P'}$ with interior disjoint from $D$, $y$ contains at least four arcs representing each of 3 mutually non-isotopic classes of seams in $P$ by an argument identical to Lemma \ref{lem:seamed}. If $\rho_i$ is not contained in $D$ then $y\cap D$ consists of four arcs parallel to $\gamma_s \cap D$. If $\rho_i \subset D$, then $y\cap D$ consists of two arcs parallel to $\gamma_s\cap D$ and two additional arcs that are essential in $D$. See Figure \ref{fig:gammax}. Hence $y$ is 2-seamed with respect to $\mathcal{P}'$.

 \begin{figure}[h]
 \labellist
\Huge \hair 2pt
\pinlabel \textcolor{blue}{$y$} at 98 146
\pinlabel $D$ at 193 193
\endlabellist
 \centering \scalebox{.6}{\includegraphics{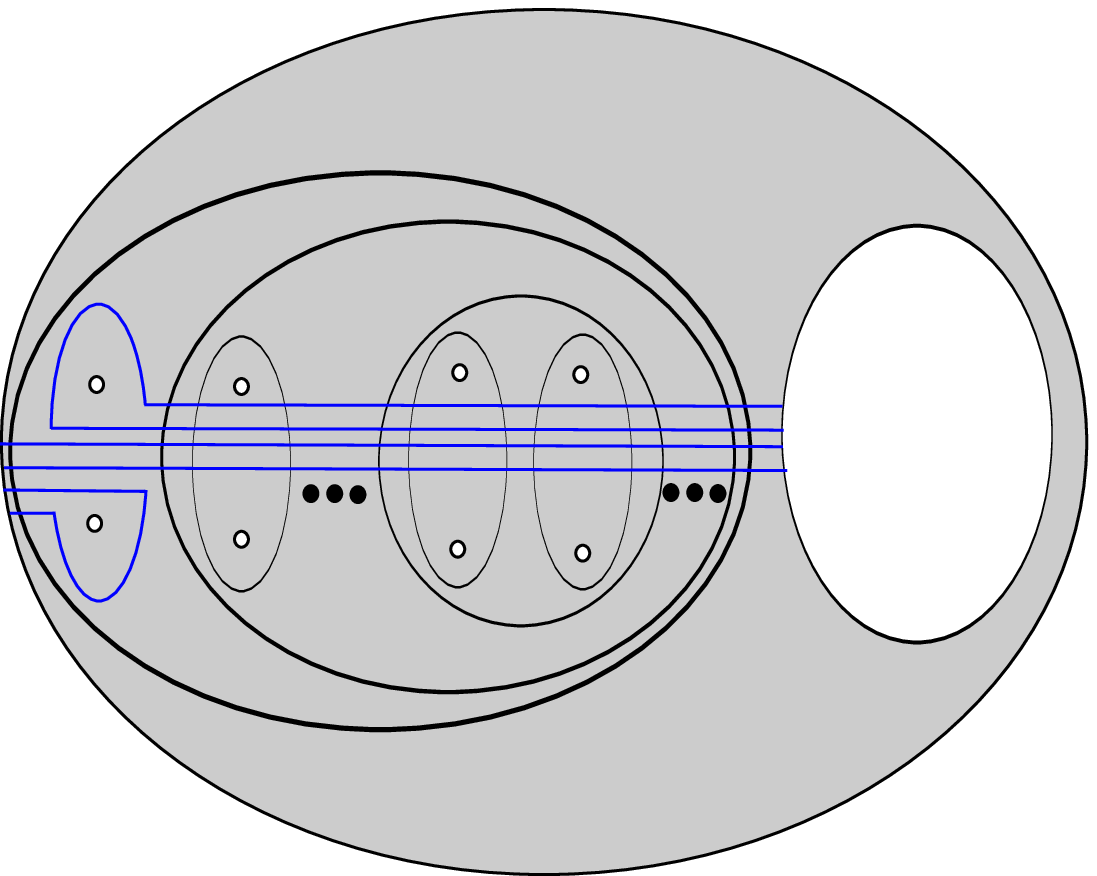}}
 \caption{} \label{fig:gammax}
 \end{figure}

Assume that every curve in $Y^{n-1}$ contains at least 4 copies of every seam represented by $\gamma_s$ with respect to $\mathcal{P}'$ outside of $D$ and $y \cap D$ consists of at least $2$ arcs isotopic to $\gamma_s \cap D$. If $y\in Y^{n}$ then $y = \tau^2_{Y^{n-1}}(\rho_i)$ for some $i$. Since $\mathcal{P}$ and $\mathcal{P}'$ are identical outside of $D$, for each pants component of $\Sigma - \mathcal{P}'$ with interior disjoint from $D$, $y$ contains at least $4 \times 2^2 = 16$ arcs representing each of 3 mutually non-isotopic classes of seams in $P$ by an argument identical to Lemma \ref{lem:seamed}. If $\rho_i$ is not contained in $D$ then $y\cap D$ consists of at least $16$ arcs parallel to $\gamma_s \cap D$. If $\rho_i \subset D$ then $y\cap D$ consists of at least $8$ arcs parallel to $\gamma_s\cap D$ and additional arcs that are essential in $D$. By induction, every curve $y$ in $Y^n$ is $2$-seamed with respect to $\mathcal{P}'$ for each $n$. $\square$

\medskip

In particular, $y^{n-(d+1)}_{l}$ meets $D$ in at least two arcs isotopic to $\gamma_s \cap D$ and is 2-seamed with respect to $\mathcal{P}'$.

Let $A$ be an annular neighborhood of $y^{n-(d+1)}_{l}$. As $\gamma_0$ 1-circles around $y^{n-(d+1)}_{l}$ with respect to $\mathcal{P}$, let $\alpha$ be the arc in $A - \mathcal{P}$ depicted in Figure \ref{fig:ystar} so that the endpoint union of $\alpha$ with a subarc of $\gamma_0$ is a curve $y^*$ which is isotopic in $A$ to $y^{n-(d+1)}_{l}$.

\begin{figure}[h]
 \labellist
 \huge \hair 2pt
 \pinlabel \textcolor{magenta}{$\alpha$} at 92 128
 \pinlabel \textcolor{green}{$\gamma_0$} at 75 167
 \pinlabel \textcolor{red}{$\mathcal{P}$} at 160 155
 \pinlabel \textcolor{magenta}{$y*$} at 286 129
 \pinlabel \textcolor{green}{$\gamma_0$} at 268 167
 \pinlabel \textcolor{red}{$\mathcal{P}$} at 353 155
 \endlabellist\centering \scalebox{.8}{\includegraphics{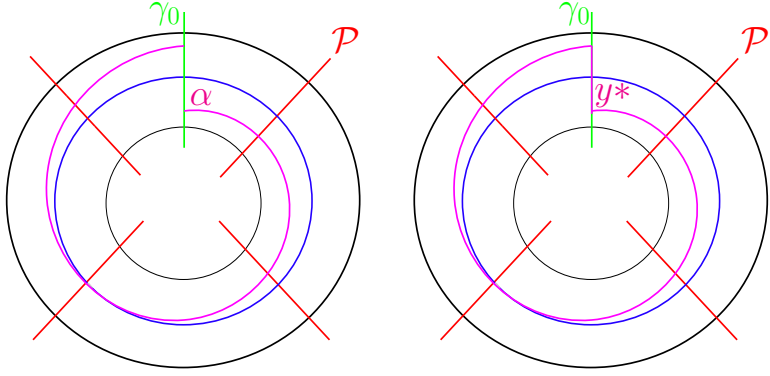}}
 \caption{The curve $y^*$.}\label{fig:ystar}
 \end{figure}

\medskip

\noindent Claim 2: After an isotopy of $\mathcal{P}'$ that preserves criteria 1-7, $y^*$ meets $\mathcal{P}'$ efficiently.

\noindent Proof: Let $D_1$ and $D_2$ be the subdisks of $A$ with boundary in $y^{n-(d+1)}_{l} \cup y^*$ as depicted in Figure \ref{fig:triangles}. Let $\beta$ be an outermost arc of $\mathcal{P}' \cap D_i$ in $D_i$. By criteria 6, $\beta$ cannot have both endpoints in $\gamma_0$. By criteria 7, $\beta$ cannot have both endpoints in $y^{n-(d+1)}_{l}$. If $\beta$ has both endpoints in $\alpha$, eliminate $\beta$ via an isotopy of $\mathcal{P}'$ along the subdisk of $D_i$ with boundary $\beta$ union a subarc of $\alpha$. Since this isotopy is supported in a region disjoint from $y^{n-(d+1)}_{l}\cup \gamma_0 \cup \mathcal{P}$, we can assume criteria 1-7 still hold. If $\beta$ has one boundary point in $\gamma_0$ and one in $\alpha$ then we can eliminate $\beta$ via an isotopy of $\mathcal{P}'$ as in Figure \ref{fig:Xprimeiso}. This isotopy preserves criteria 1-7. We conclude that all arcs of $\mathcal{P}' \cap D_i$ have one endpoint in $y^{n-(d+1)}_{l}$ and the other in either $\alpha$ or $\gamma_0$. Hence, $|y^* \cap \mathcal{P}'|=|y^{n-(d+1)}_{l}\cap \mathcal{P}'|$ and $y^*$ meets $\mathcal{P}'$ efficiently. $\square$

 \begin{figure}[h]
 \labellist
 \huge \hair 2pt
 \pinlabel $D_1$ at 35 80
 \pinlabel $D_2$ at 155 80
 \pinlabel \textcolor{magenta}{$y*$} at 85 30
 \pinlabel \textcolor{green}{$\gamma_0$} at 97 167
 \pinlabel \textcolor{red}{$\mathcal{P}$} at 160 155
 \endlabellist
 \centering \scalebox{.8}{\includegraphics{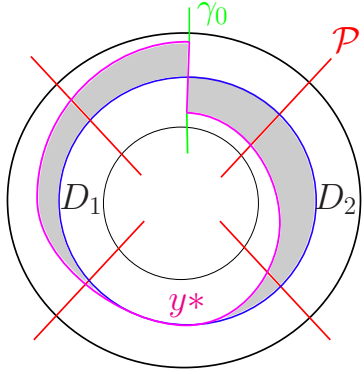}}
 \caption{The disks $D_1$ and $D_2$.}\label{fig:triangles}
 \end{figure}

 \begin{figure}[h]
 \labellist
 \Huge \hair 2pt
 \pinlabel $\mathcal{P}'$ at 98 90
 \pinlabel $D_i$ at 100 30
 \pinlabel $\mathcal{P}'$ at 325 90
 \pinlabel $D_i$ at 330 30
 \endlabellist
 \centering \scalebox{.6}{\includegraphics{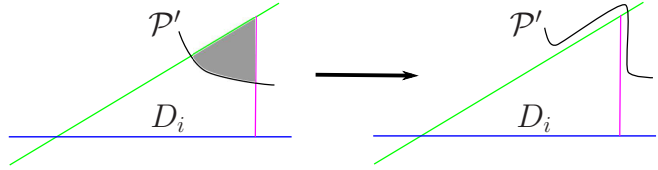}}
 \caption{An isotopy of $\mathcal{P}'$ in a neighborhood of $D_i$.}\label{fig:Xprimeiso}
 \end{figure}

By Fact 2 and the above claim, there is an ambient isotopy from $y^*$ to $y^{n-(d+1)}_{l}$ fixing $\mathcal{P}'$ setwise. Since  $y^{n-(d+1)}_{l}$ meets $D$ in at least two arcs isotopic to $\gamma_s \cap D$ and is 2-seamed with respect to $\mathcal{P}'$, then $y^*$ meets $D$ in at least two arcs isotopic to $\gamma_s \cap D$ and is 2-seamed with respect to $\mathcal{P}'$. Since $\alpha$ is contained in $\Sigma-\mathcal{P}$, then $\gamma_0$ contains at least one copy of every seam of $\mathcal{P}'$ outside of $D$ and at least one copy of $\gamma_s \cap D$. Thus $\gamma_0$ is 1-seamed with respect to $\mathcal{P}'$.  However, we observed $\gamma_0$ contains a subarc $\omega_0$ that will intersect any 1-seamed curve and therefore we have a contradiction.  Hence, $dist(K_{\mathcal{P'}},K_{Y^n}) \geq n$.

By choosing $\mathcal{P}=\mathcal{P'}$, the above argument simplifies considerably and shows $dist(K_{\mathcal{P}},K_{Y^n})\geq n$.
\end{proof}

\begin{cor}
Let $\Sigma$ be a genus $g$ surface with $2b \geq 6$ marked points. Additionally, let $\mathcal{P}$ be a pants decomposition for $\Sigma$. Let $p_1,..,p_k$ be a collection of marked points such that $1<k<2b-1$ and let $n$ be any positive integer. Then there is a curve $\gamma$ that bounds a disk in $\Sigma$ containing exactly the marked points $p_1,..,p_k$ such that $dist(K_{\mathcal{P}}, \gamma)\geq n$.
\end{cor}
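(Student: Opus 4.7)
The plan is to apply Theorem~\ref{dist} to a specially chosen pants decomposition $\mathcal{P}^*$ in the form of Definition~\ref{Xprime} that already contains a curve bounding the disk we want, then translate the resulting high-distance curve back to the given $\mathcal{P}$ by a homeomorphism of $\Sigma$.

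First I build $\mathcal{P}^*$. A standard pants decomposition of a $2b$-marked disk $D$ contains ``nested'' curves bounding sub-disks with $2i$ marked points for $i=1,\ldots,b-1$ and ``separating'' curves bounding sub-disks with $2i+1$ marked points; together with the additional freedom in the parameters $\{b_1,\dots,b_i\}$ of the warped construction, every $k$ in the range $2\le k\le 2b-2$ is realized as the number of marked points enclosed by some curve in an appropriately arranged standard or warped pants decomposition of $\Sigma$. By positioning the marked points within $D$ as needed, I can ensure that the enclosed subset is precisely $\{p_1,\dots,p_k\}$; call the resulting pants decomposition $\mathcal{P}^*$ and the distinguished curve $\rho\in\mathcal{P}^*$.

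Now I apply Theorem~\ref{dist} with $\mathcal{P}^*$ playing the role of both $\mathcal{P}$ and $\mathcal{P}'$, as in the specialization noted at the end of the theorem's proof. This yields a pants decomposition $Y^n$ produced by iterated double Dehn twists of $\mathcal{P}^*$ along seed-type curves, with $dist(K_{\mathcal{P}^*},K_{Y^n})\ge n$. Each Dehn twist used in the construction is performed along a curve disjoint from the marked points, so the composition acts as a self-homeomorphism of the marked surface $\Sigma$ fixing every marked point. Therefore the image $y^n\in Y^n$ of $\rho$ under the iterated Dehn twists still bounds a disk in $\Sigma$ containing exactly $\{p_1,\dots,p_k\}$, and since $y^n\in K_{Y^n}$ we have $dist(K_{\mathcal{P}^*},y^n)\ge n$.

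To conclude, I transfer the distance bound from $K_{\mathcal{P}^*}$ to $K_{\mathcal{P}}$. The mapping class group of the marked surface $\Sigma$ acts isometrically on $\mathcal{C}(\Sigma)$; using the flexibility afforded by the hypothesis $2b\ge 6$, I choose a self-homeomorphism $\phi:\Sigma\to\Sigma$ fixing the set of marked points with $\phi(\mathcal{P})=\mathcal{P}^*$ and $\phi(\{p_1,\dots,p_k\})=\{p_1,\dots,p_k\}$; then $\gamma=\phi^{-1}(y^n)$ bounds a disk with exactly $\{p_1,\dots,p_k\}$ and satisfies $dist(K_{\mathcal{P}},\gamma)=dist(K_{\mathcal{P}^*},y^n)\ge n$. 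The main obstacle is precisely this compatibility step: one has to argue that for the given pair $(\mathcal{P},\{p_1,\dots,p_k\})$ a standard-form $\mathcal{P}^*$ in the same mapping class orbit and enclosing the same $k$ marked points via one of its curves actually exists, which requires care with the combinatorial types of pants decompositions and with the odd-$k$ case, where one must invoke the ``separating'' curves of the standard structure rather than the purely warped parameters.
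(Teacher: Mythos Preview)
Your argument has a genuine gap at the transfer step. You need a self-homeomorphism $\phi$ of the marked surface with $\phi(\mathcal{P})=\mathcal{P}^*$, but two pants decompositions of $\Sigma$ lie in the same mapping-class orbit only when their dual graphs (with trivalent vertices for pants components and univalent vertices for $2$-marked disks) are isomorphic. The given $\mathcal{P}$ is completely arbitrary, so its combinatorial type need not match that of any standard or warped decomposition from Definition~\ref{Xprime}. You flag this as ``the main obstacle'' but do not resolve it, and in general it cannot be resolved. The further requirement that $\phi$ preserve the subset $\{p_1,\dots,p_k\}$ setwise only makes matters worse.

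The paper's proof avoids this issue by never changing pants decompositions. It runs the iterated Dehn-twist construction of Theorem~\ref{dist} directly with the given $\mathcal{P}$ (taking $\mathcal{P}=\mathcal{P}'$, where the argument needs only a $1$-seamed seed curve and so applies to any pants decomposition). This produces a curve $y^{n+1}_i$ far from $K_{\mathcal{P}}$ that bounds a disk containing exactly two marked points---not the $k$ we want. The key idea you are missing is that one then \emph{builds} the desired curve from this one: connect the remaining points of $\{p_1,\dots,p_k\}$ to that $2$-marked disk by disjoint embedded arcs and let $\gamma$ be the boundary of a regular neighborhood of the union. The curve $\gamma$ now encloses exactly $\{p_1,\dots,p_k\}$ and is disjoint from $y^{n+1}_i$, so $dist(K_{\mathcal{P}},\gamma)\ge dist(K_{\mathcal{P}},y^{n+1}_i)-1\ge n$. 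A second case, handled similarly with one extra step, covers the possibility that no $2$-marked disk of $\mathcal{P}$ contains two of the $p_j$. This ``grow the disk by arcs'' trick is what replaces your attempt to locate the target curve inside a pants decomposition from the outset.
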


\begin{proof}
There are two cases to consider.  In the first case, assume there exists a curve $\rho_i$ in $\mathcal{P}$ such that $\rho_i$ bounds a 2-marked disk in $\Sigma$ containing marked points $p,p' \in \{p_1,...,p_k\}$. For the purposes of applying the construction in the proof of Theorem \ref{dist}, let $\mathcal{P}=\mathcal{P'}$. By the proof of Theorem \ref{dist}, $dist(K_{\mathcal{P}}, y^{n+1}_i)\geq n+1$ where $y^{n+1}_{i}=\tau^{2}_{Y^n}(\rho_i)$. Let $E$ be the disk in $\Sigma$ with boundary $y^{n+1}_{i}$ that contains $p$ and $p'$. Connect each of the remaining $k-2$ marked points in $\{p_1,...,p_k\}$ to $E$ via a collection of $k-2$ disjoint arcs. Let $\gamma$ be the boundary of a regular neighborhood of the union of these arcs and the disk $E$. $\gamma$ is an essential curve in $\Sigma$ bounding a disk containing exactly the marked points $p_1,...,p_k$ and disjoint from $y^{n+1}_{i}$. Hence, $dist(K_{\mathcal{P}}, \gamma)\geq n$.

In the second case, suppose that no component of $\mathcal{P}$ bounds a 2-marked disk in $\Sigma$ containing two marked points in $\{p_1,...,p_k\}$.  So assume instead that $\rho_i$ is a curve in $\mathcal{P}$ that bounds a 2-marked disk in $\Sigma$ containing the marked points $p,p'$ where $p$ is an element of $\{p_1,...,p_k\}$ and $p'$ is not.  For the purposes of applying the construction in the proof of Theorem \ref{dist}, let $\mathcal{P}=\mathcal{P'}$. By the proof of Theorem \ref{dist}, $dist(K_{\mathcal{P}}, y^{n+2}_i)\geq n+2$ where $y^{n+2}_{i}=\tau^{2}_{Y^{n+1}}(\rho_i)$. Let $E$ be the disk in $\Sigma$ with boundary $y^{n+2}_{i}$ and containing $p$ and $p'$. Connect each of the remaining $k-1$ marked points in $\{p_1,...,p_k\}$ to $E$ via a collection of $k-1$ disjoint arcs. Let $\alpha$ be the boundary of a regular neighborhood of the union of these arcs and the disk $E$. Since $k \leq b$, $\alpha$ is an essential curve in $\Sigma$ bounding a disk containing exactly the marked points $p',p_1,...,p_k$. $E$ contains in its interior a disk $E'$ which contains exactly the marked points $ p_1,...,p_k$. Let $\gamma=\partial E'$. Since $\gamma$ is disjoint from $\alpha$ and $\alpha$ is disjoint from $y^{n+2}_{i}$, $dist(K_{\mathcal{P}}, \gamma)\geq n$.
\end{proof}

From the above corollary we can immediately also conclude the following:

\begin{cor}
Let $B$ be a ball containing a $b$-strand tangle $T$ with $b\geq 3$. Let $p_1,..,p_k$ be a collection of points $T \cap \bdd B$ such that $1<k<2b-1$ and let $n$ be any positive integer. Then there is a curve $\gamma$ that bounds a disk in $\bdd B$ containing exactly the marked points $p_1,..,p_k$ such that for every compressing disk $D$ for $\bdd B - T$ contained in $B$, $dist(\bdd D, \gamma)\geq n$.
\end{cor}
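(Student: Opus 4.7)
The plan is to reduce this corollary directly to the preceding one by recognizing $\bdd B$ as a $2b$-marked sphere and $(B,T)$ as a pair $(V_{\mathcal{P}}, A_{\mathcal{P}})$ arising from a carefully chosen pants decomposition $\mathcal{P}$ of $\bdd B$. Since $B$ is a genus $0$ handlebody containing $b$ boundary parallel arcs $T$, the construction following Definition \ref{def:compcoll} produces a complete collection of disks $\mathcal{D}$ for $(B,T)$. Setting $\Sigma=\bdd B$ and $\mathcal{P}=\bdd \mathcal{D}$ yields a pants decomposition of the $2b$-marked sphere $\Sigma$, and by the uniqueness remark following the construction of $V_{\mathcal{P}}$, there is an isotopy of $B$ fixing $\Sigma$ pointwise identifying $(B,T)$ with $(V_{\mathcal{P}}, A_{\mathcal{P}})$.

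The next step is to observe that every compressing disk $D$ for $\bdd B - T$ contained in $B$ satisfies $\bdd D \in K_{\mathcal{P}}$. Indeed, $\bdd D$ is an essential curve in $\Sigma$ that bounds a disk in $V_{\mathcal{P}}\cong B$ disjoint from $A_{\mathcal{P}}\cong T$, which is exactly the defining property of $K_{\mathcal{P}}$.

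Finally, I would apply the previous corollary to the marked sphere $\Sigma$ with pants decomposition $\mathcal{P}$ and the designated marked points $p_1,\ldots,p_k$; the hypotheses $b\geq 3$ and $1<k<2b-1$ transfer without modification. This produces a curve $\gamma$ that bounds a disk in $\Sigma$ containing exactly $\{p_1,\ldots,p_k\}$ and satisfies $dist(K_{\mathcal{P}},\gamma)\geq n$. Since $\bdd D\in K_{\mathcal{P}}$ for every compressing disk $D$ for $\bdd B-T$ in $B$, it follows that $dist(\bdd D,\gamma)\geq n$ for all such $D$, which is the desired conclusion.

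There is essentially no technical obstacle here; the only thing to double-check is the identification $(B,T)\cong (V_{\mathcal{P}}, A_{\mathcal{P}})$, which is immediate because cutting $B$ along $\mathcal{D}$ produces exactly the pieces (unmarked $3$-balls meeting three scar disks, and $2$-marked $3$-balls meeting one scar disk) that are reassembled in the construction of $V_{\mathcal{P}}$.
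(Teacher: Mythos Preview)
Your argument is correct and is exactly the deduction the paper leaves implicit when it says the result follows ``immediately'' from the preceding corollary: identify $(B,T)$ with $(V_{\mathcal P},A_{\mathcal P})$ for a pants decomposition $\mathcal P$ of the $2b$-marked sphere $\partial B$, so that boundaries of compressing disks for $\partial B - T$ in $B$ lie in $K_{\mathcal P}$, and then invoke the previous corollary. The one point to be aware of is that your identification $(B,T)\cong(V_{\mathcal P},A_{\mathcal P})$ requires the arcs of $T$ to be boundary parallel in $B$, which is not automatic for an arbitrary tangle in a ball; this is, however, the reading under which the paper's one-line reduction goes through.
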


\section{High distance Bridge Surfaces}

The definition of a bridge surface for a knot in a closed manifold has a natural generalization to tangles properly embedded in a manifold with boundary:

\begin{defin}
Let $T$ be a tangle properly embedded in a manifold $M$. An embedded closed surface $\Sigma$ transverse to $T$ is a \emph{bridge surface for $(M,T)$} if the closure of the complementary components of $\Sigma$ in $M$ are two compression bodies $V_1$ and $V_2$ with $\Sigma = \bdd_+ V_i$ and $V_i \cap T$ is a collection of arcs that are either boundary parallel to $\Sigma$ or lie vertical (i.e., agree with an $I$-fiber) in $\bdd_- V_i \times I \subset V_i$.  In the case where $T$ is a link, $V_i \cap T$ are all boundary parallel to $\Sigma$ and $|T \cap V_1| = |T \cap V_2|$.  In this case, if $b = |T \cap V_i|$, then $T$ is \emph{$b$-bridged} with respect to $\Sigma$.
\end{defin}

\begin{defin}
Given a bridge surface $\Sigma$ for a tangle $T$ in a manifold $M$ splitting $M$ into compression bodies $V_1$ and $V_2$, let $\mathcal{P}_i$ be the set of simple closed curves in $\Sigma$ that bound compressing disks in $V_i-T$. Then $dist(\Sigma, T)=dist(\mathcal{P}_1, \mathcal{P}_2)$ in $\mathcal{C}(\Sigma)$.
\end{defin}

\begin{cor}\label{cor:highdlink}
Given non-negative integers $b$, $c$, $d$, and $g$ with $c\leq b$ such that if $g=0$, then $b \geq 3$, and if $g=1$, then $b\geq 1$, there exists a closed orientable 3-manifold $M$ containing a $c$-component link $L$ and a bridge surface $\Sigma$ of genus $g$ for $(M,L)$ so that $L$ is $b$-bridge with respect to $\Sigma$ and $dist(\Sigma,L)\geq d$.
\end{cor}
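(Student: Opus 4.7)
The strategy is to derive the corollary from Theorem \ref{dist} by choosing a warped pants decomposition whose bookkeeping forces the resulting link to have exactly $c$ components. Set $\Sigma$ to be an orientable surface of genus $g$ with $2b$ marked points, satisfying the restrictions on $b$ when $g \in \{0,1\}$. Let $\mathcal{P}$ be the standard pants decomposition of $\Sigma$. Since $c \leq b$, the integer $2b$ admits a partition $2b = b_1 + b_2 + \cdots + b_c$ into even positive integers, e.g.\ $b_1 = \cdots = b_{c-1} = 2$ and $b_c = 2(b-c+1)$. Take $\mathcal{P}'$ to be the corresponding $\{b_1,\ldots,b_c\}$-warped pants decomposition of $\Sigma$ from Definition \ref{Xprime}. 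Applying Theorem \ref{dist} with $n = d$ produces a pants decomposition $Y^n$ satisfying $dist(K_{\mathcal{P}'}, K_{Y^n}) \geq d$, and Definition \ref{def:create_manifold} then gives a closed orientable 3-manifold $M = V_{\mathcal{P}'} \cup_\Sigma V_{Y^n}$ containing a link $L$ with bridge surface $\Sigma$ of genus $g$, bridge number $b$, and $dist(\Sigma, L) \geq d$.

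Next I would verify that $L$ has exactly $c$ components. The components of $L$ are counted by cycles in the graph $G$ on the $2b$ marked points whose edges come from the pairings induced by the $b$ arcs of $L \cap V_{\mathcal{P}'}$ and the $b$ arcs of $L \cap V_{Y^n}$; these pairings are in turn determined by the $2$-marked disk components of $\Sigma - \mathcal{P}'$ and $\Sigma - Y^n$, respectively. Because each iterated Dehn twist defining $Y^n$ can be chosen supported away from the marked points, the composite homeomorphism carrying $\mathcal{P}$ to $Y^n$ fixes each marked point, so the $2$-marked disks of $Y^n$ pair marked points exactly as those of $\mathcal{P}$ do. Thus $G$ combines the standard pairing inherited from $\mathcal{P}$ with the warped pairing from $\mathcal{P}'$. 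An inspection of Figures \ref{fig:standarddisk} and \ref{fig:warpdisk} shows that within a warped chunk of size $b_j$, the warped pairing is a cyclic shift of the standard pairing (and coincides with it when $b_j = 2$), so the two pairings combine to form exactly one cycle per chunk. Hence $G$ has $c$ cycles and $L$ has $c$ components.

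The main obstacle is the component count, specifically the claim that within each warped chunk the basic warped pairing cyclically shifts the standard one. Although this is a direct combinatorial check from Definition \ref{Xprime}, it is the only place where the choice of warped decomposition enters nontrivially; everything else follows from the prior results in the paper. The degenerate case $c = 0$ forces $b = 0$ and hence $g \geq 2$, in which case $\Sigma$ is unmarked, $\mathcal{P} = \mathcal{P}'$, $L$ is empty, and the conclusion recovers Evans' original construction of high-distance Heegaard splittings from \cite{Ev}.
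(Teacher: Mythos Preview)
Your argument is correct and follows essentially the same route as the paper: glue $V_{\mathcal{P}'}$ to $V_{Y^d}$ along $\Sigma$, invoke Theorem~\ref{dist} for the distance bound, and count components by observing that the iterated Dehn twists preserve the pairing of marked points induced by the $2$-marked disks. Your explicit partition $b_1=\cdots=b_{c-1}=2$, $b_c=2(b-c+1)$ and your treatment of the $c=0$ edge case are details the paper leaves implicit, but otherwise the two proofs coincide.
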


\begin{proof}
Let $\mathcal{P}$ be a standard pants decomposition for a closed, genus $g$, $2b$-marked surface $\Sigma$. Let $\mathcal{P}'$ be the $\{b_1, b_2, ..., b_c\}$-warped pants decomposition for $\Sigma$. Let $Y^d$ be the pants decomposition for $\Sigma$ produced using $\mathcal{P}$ as in Theorem \ref{dist}. Then we can construct a 3-manifold $M$ using $Y^d$ and $\mathcal{P'}$ as in Definition \ref{def:create_manifold}.  Moreover, $\Sigma$ is a bridge surface of a link $L$ such that $\Sigma$ separates $(M,L)$ into $(V_{\mathcal{P'}},A_{\mathcal{P'}})$ and $(V_{Y^d},A_{Y^d})$. By Theorem \ref{dist}, $dist(\Sigma,L)= dist(K_{\mathcal{P'}}, K_{Y^d})\geq d$.

To see that $L$ has $c$ components, pair marked points in $\Sigma$ with respect to some pants decomposition $\mathcal{Q}$ by the rule $x$ is paired with $x'$ if $x$ and $x'$ are contained in a common component of $\Sigma-\mathcal{Q}$. Note that the Dehn twist operator applied to a pants decomposition preserves this pairing. Hence, $\mathcal{P}$ and $Y^d$ induce the same pairing on marked points of $\Sigma$. Since the pairing induced by $Y^d$ (resp. $\mathcal{P'}$) corresponds to which pairs of points of $\Sigma\cap L$ are connected via a subarc of $L$ in $V_{Y^n}$ (resp. $V_{\mathcal{P'}}$), then, by the definition of a $\{b_1, b_2, ..., b_c\}$-warped pants decomposition, $L$ has exactly $c$ components. Compare Figure \ref{fig:standarddisk} and Figure \ref{fig:bbwarpdisk}.
\end{proof}

\begin{cor}\label{cor:hightangle}
Given two possibly disconnected surfaces $S$ and $S'$ with $2p_{S}$ and $2p_{S'}$ marked points respectively so that $g(S)=g_{S}$ and $g(S')=g_{S'}$ and integers $p$, $d$ and $g$ such that $2p\geq max\{2p_{S}, 2p_{S'}\}$, $g \geq max\{g_{S}, g_{S'}\}$, if $g=0$, then $b \geq 3$, and if $g=1$, then $b\geq 1$, there exists an orientable manifold $M$ containing a tangle $T$ so
that there is a bridge surface $\Sigma$ for $(M,T)$ so that all of the following conditions are satisfied:
\begin{enumerate}
\item $\bdd M=S \cup S'$
\item $\Sigma$ separates $\bdd M$ into the sets $S$ and $S'$
\item $genus(\Sigma)=g$
\item $|T\cap \Sigma |=2p$
\item $T$ intersects each surface in $\bdd M$ exactly in the marked points
\item $dist(\Sigma, T)\geq d$.
\end{enumerate}

\end{cor}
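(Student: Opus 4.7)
The plan is to reduce to the closed case already handled in Corollary~\ref{cor:highdlink}. First apply that corollary to obtain a closed orientable 3-manifold $\bar M = V_1 \cup_\Sigma V_2$ containing a link $\bar L$, where each $V_i$ is a handlebody of genus $g$ meeting its partner along the bridge surface $\Sigma$, and where $|\bar L \cap \Sigma| = 2p$ and $dist(\Sigma, \bar L) \geq d$. (The number of components of $\bar L$ is unconstrained here; it is determined by the combinatorics of the closed construction and not by the boundary data of the corollary.)

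Next, in the interior of $V_1$ embed a closed surface homeomorphic to $S$, realizing $V_1$ as $V_1 = V_1' \cup_S H_1$ where $V_1'$ is a compression body with $\partial_+ V_1' = \Sigma$ and $\partial_- V_1' = S$, and $H_1$ is a disjoint union of handlebodies (one for each component of $S$) filling in $S$. The hypothesis $g \geq g_S$ allows such an intermediate surface in the genus-$g$ handlebody $V_1$: one chooses a collection of pairwise disjoint essential disks in $V_1$ whose complement has components of the prescribed genera, and takes $S$ to be the frontier of a regular neighborhood of a union of those components. Perform the symmetric construction in $V_2$ using $S'$, obtaining $V_2 = V_2' \cup_{S'} H_2$.

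Now isotope the arcs of $\bar L \cap V_1$ (keeping $\bar L \cap \Sigma$ fixed pointwise) so that exactly $p_S$ of the $p$ boundary-parallel arcs meet $S$ transversely in two points, with the middle subarc lying in $H_1$ and the two outer subarcs vertical in a product neighborhood $S \times I \subset V_1'$, while the remaining $p - p_S$ arcs are disjoint from $S$; such an isotopy exists because any boundary-parallel arc in $V_1$ can be pushed into and back out of $H_1$. A further ambient isotopy of $S$ inside $V_1$ arranges the resulting $2p_S$ intersection points to coincide with the prescribed marked points of $S$. Carry out the analogous operation in $V_2$ using $S'$. Define $M = \bar M \setminus \mathrm{int}(H_1 \cup H_2)$ and $T = \bar L \cap M$; equivalently, $T$ is obtained from $\bar L$ by deleting the middle subarcs that were routed through $H_1 \cup H_2$. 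By construction $\partial M = S \cup S'$, $\Sigma$ separates $\partial M$ into $S$ and $S'$, the genus of $\Sigma$ is $g$, $|T \cap \Sigma| = 2p$, and $T$ meets $S$ and $S'$ exactly at their prescribed marked points. Each $V_i'$ is a compression body in which the tangle arcs are either boundary parallel to $\Sigma$ or vertical in $\partial_- V_i' \times I$; hence $\Sigma$ is a bridge surface for $(M,T)$.

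Finally, for the distance estimate, observe that $V_i' \subset V_i$ and every arc of $T$ in $V_i'$ is part of $\bar L$, so any compressing disk for $\Sigma - T$ embedded in $V_i'$ is also a compressing disk for $\Sigma - \bar L$ in $V_i$. Letting $\mathcal{P}_i$ and $\bar{\mathcal{P}}_i$ denote the corresponding sets of boundaries of such disks, $\mathcal{P}_i \subseteq \bar{\mathcal{P}}_i$, so
\[dist(\Sigma, T) = dist(\mathcal{P}_1, \mathcal{P}_2) \geq dist(\bar{\mathcal{P}}_1, \bar{\mathcal{P}}_2) = dist(\Sigma, \bar L) \geq d.\]
The main obstacle is the simultaneous embedding of the two intermediate surfaces together with the rerouting of the link arcs: one must set things up so that the correct count of arcs crosses each surface in the correct combinatorial pattern and the marked points on $S$ and $S'$ are realized exactly, all while preserving the bridge surface structure. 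Once this bookkeeping is carried out, the distance bound is immediate from the closed case.
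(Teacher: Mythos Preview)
Your proposal is correct and follows essentially the same strategy as the paper: start from the closed case of Corollary~\ref{cor:highdlink}, carve each handlebody $V_i$ down to a compression body with the prescribed negative boundary, convert the appropriate number of boundary-parallel arcs into pairs of vertical arcs, and then observe that the disk sets for the compression bodies sit inside those for the original handlebodies, giving the distance bound. The paper phrases the carving step as removing a regular neighborhood of selected loops and edges of a spine of $V_i$, whereas you phrase it as embedding the intermediate surface and deleting the handlebodies it bounds; these are two descriptions of the same operation, and the remaining bookkeeping (which arcs become vertical, why the unaffected arcs stay boundary parallel in the smaller compression body) is handled equivalently in both.
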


\begin{proof}
By Corollary \ref{cor:highdlink} there exists a closed manifold $M$ containing a $c$-component link $L$ so that there is bridge surface $\Sigma$ for $(M,L)$ with $2p$ marked points and with $dist(M,L) \geq d$. Let $V_1$ and $V_2$ be the closures of the two components of $M-\Sigma$ and let $\Gamma_1$ and $\Gamma_2$ be their spines. Recall that a spine $\Gamma_i$ of $V_i$ is a (non-unique) graph embedded such that $V_i$ deformation retracts to it. In particular we may assume that $\Gamma_i$ is homeomorphic to a disjoint collection of loops $\gamma_1,...,\gamma_g$ and edges $t_i$ connecting  $\gamma_i$ and $\gamma_{i+1}$ for $i=1,...,g-1$.

Let $g_1,...,g_r$ be the genera of the components of $S$. Remove from $V_1$ a regular neighborhood of the loops $\gamma_{g_1},...,\gamma_{g_r}$ and all arcs $t_i$
between them except the arcs $t_{g_1}, t_{g_1+g_2},...,t_{g_1+...+g_r}$. The result is a compression body $\tilde{C}_1$ with $\bdd_+\tilde{C}_1=\Sigma$ and
$\bdd_-\tilde{C}_1=S$ so that $\tilde{C}_1 \cap L$ is a collection of boundary parallel arcs. Now replace $p_{S}$ of these boundary parallel arcs with pairs of
vertical (in the product structure of $V_1-\Gamma_1$) arcs with one endpoint coinciding with the endpoints of the replaced arc and the other endpoint in the
marked points of $S$. Let $C_1$ be the new compression body containing the new arcs. Note that $\mathcal{C}(\bdd_+V_1)=\mathcal{C}(\bdd_+C_1)=\mathcal{C}(\Sigma)$ and if a curve in $\Sigma$ bounds a disk in $C_1$ it also bounds a disk in $V_1$.

Perform the analogous operations on $V_2$. The resulting manifold contains a tangle $T$ which has all of the desired properties.
\end{proof}

\begin{rmk}
Note that the previous corollary generalizes so that we can construct $(M,T)$ with the additional flexibility that $T$ has $c$ components for any integer satisfying the inequalities $p_{S}+p_{S'} \leq c \leq min(2p_{S},2p_{S'})+p$. The proof is left as an exercise for the reader.
\end{rmk}

 \end{document}